\DeclareMathOperator{\AGL}{AGL}
\DeclareMathOperator{\GL}{GL}
\DeclareMathOperator{\Sym}{Sym}
\newcommand*{\pmat}[1]{\begin{bmatrix}#1\end{bmatrix}}
\newcommand*{\mat}[1]{\begin{matrix}#1\end{matrix}}
\newcommand*{\spmat}[1]{\left(\begin{smallmatrix}#1\end{smallmatrix}\right)}
\DeclareMathOperator{\Ann}{Ann}
\DeclareMathOperator{\gl}{GL}
\DeclareMathOperator{\agl}{AGL}
\DeclareMathOperator{\aut}{Aut}
\DeclareMathOperator{\rank}{Rank}
\DeclareMathOperator{\sym}{Sym}
\DeclareMathOperator{\syp}{Sp}
\DeclareMathOperator{\fix}{Fix}
\DeclareMathOperator{\soc}{Soc}
\newcommand{\p}{\circ}
\let\phi\varphi
\theoremstyle{plain}
\newtheorem{theorem}{Theorem}[section]
\newtheorem{proposition}[theorem]{Proposition}
\newtheorem{corollary}[theorem]{Corollary}
\theoremstyle{remark}
\newtheorem{remark}{Remark}
\theoremstyle{definition}
\newtheorem{definition}[theorem]{Definition}
\newtheorem{example}[theorem]{Example}
\newtheorem*{notation*}{Notation}
\lstdefinelanguage{GAP}{%
 morekeywords={%
 Assert,Info,IsBound,QUIT,%
 TryNextMethod,Unbind,and,break,%
 continue,do,elif,%
 else,end,false,fi,for,%
 function,if,in,local,%
 mod,not,od,or,%
 quit,rec,repeat,return,%
 then,true,until,while%
 },%
 sensitive,%
 morecomment=[l]\#,%
 morestring=[b]",%
 morestring=[b]',%
}[keywords,comments,strings]
\DeclareMathAlphabet{\mbb}{U}{bbold}{m}{n}
\newcommand{\ear}{\texttt{EARS}}
\newcommand{\baa}{\texttt{BAA}}
\newcommand{\bbb}{\texttt{BBB}}
\newcommand{\D}{\Delta}
\begin{document}
\title{Binary bi-braces and applications to cryptography} 
\author[R.~Civino]{Roberto Civino 
\orcidlink{0000-0003-3672-8485}
}
\author[V.~Fedele]{Valerio Fedele}

\address{DISIM \\
 Universit\`a degli Studi dell'Aquila\\
 via Vetoio\\
 67100 Coppito (AQ)\\
 Italy}       

\email{roberto.civino@univaq.it}
\email{valerio.fedele@guaduate.univaq.it}

\date{} \thanks{R.\ Civino is member of INdAM-GNSAGA
 (Italy) and is supported by the Centre of EXcellence on Connected, Geo-Localized and 
 Cybersecure Vehicles (EX-Emerge), funded by Italian Government under CIPE resolution n.\ 70/2017 (Aug.\ 7, 2017).}

\subjclass[2010]{16T25, 08A35, 94A60} \keywords{Binary bi-braces; alternating algebras; elementary abelian regular subgroups; differential cryptanalysis}

\begin{abstract}
In a XOR-based alternating block cipher the plaintext is masked by a sequence of 
layers each performing distinct actions: a highly nonlinear permutation, a linear transformation, and the bitwise key addition.
When assessing resistance against \emph{classical} differential attacks (where differences are computed with respect to XOR), the cryptanalysts must only take into account 
differential probabilities introduced by the nonlinear layer, this being the only one whose differential transitions are not
deterministic. The temptation of computing differentials with respect to another difference operation 
runs into the difficulty of understanding how differentials propagate through the XOR-affine levels of the cipher.
In this paper we introduce a special family of \emph{braces} that enable the derivation of a set of 
differences whose interaction with \emph{every} layer of an XOR-based alternating block cipher can be understood.
We show that such braces can be described also in terms of alternating binary algebras of nilpotency class two. 
Additionally, we present a method to compute the automorphism group of these structures through an 
equivalence between bilinear maps. By doing so, we characterise the XOR-linear permutations for which 
the differential transitions with respect to the new difference are deterministic, facilitating an 
alternative differential attack.
\end{abstract}

\maketitle

\section{Introduction}
Let $N$ be an integer such that computing $2^N$ operations is unfeasible and let $M$ be an $N$-dimensional vector space over $\mathbb F_2$. Generally speaking, a block cipher $\Phi \subset \Sym(M)$ is a large subset of permutations $E_k$ over the \emph{message space} $M$, where $k$ ranges in a set $\mathcal K$ of parameters called the \emph{key space}. 
For the purposes of this paper, it is sufficient to assume that $\Phi$ is a key-alternating XOR-based cipher from the family of \emph{substitution-permutation networks} (SPN), namely a set of permutations in which the encryption function is obtained as the composition of an iterated structure made by a \emph{confusion layer} which is a highly nonlinear function, by a linear \emph{diffusion layer} which maximises the effects of a local perturbation in the plaintext and by the XOR-based key addition with the so-called \emph{round key}. The public components of such ciphers are designed on one hand keeping the focus on the efficiency of the whole encryption/decryption process, and on the other on preventing all possible known attack. The most famous is undoubtably \emph{differential cryptanalysis}~\cite{biham1991differential,K94,W99,BCJW02,BBS99}, a general chosen-plaintext technique against symmetric primitives in which the advantage coming from the ability to predict the output difference of encrypted messages  allows the cryptanalyst to recover information on the supposedly unknown key. After decades of research, block-cipher designers are equipped with methods which make the attack unfeasible, i.e.\ the high nonlinearity of the confusion layer and the diffusion property of the linear layer make biases in the distribution of difference of ciphertexts very hard to predict.
In both cases, the nonlinearity of the confusion layer and the capacity of the diffusion layer to spread the differences are measured with respect to the key-mixing operation, i.e.\ the XOR. As a consequence, the family of layers that nowadays we find in standardised constructions are somewhat optimal in this sense. As an example, the confusion layer of the block cipher PRESENT~\cite{bogdanov2007present} features a 4-bit permutation whose nonlinearity w.r.t.\ the XOR is optimal in the sense of Nyberg~\cite{nyberg1991perfect,N93} and Leander and Poschmann~\cite{leander2007classification}, 
and the confusion layer of AES~\cite{daemen2002design} uses a widely-believed (XOR)-optimal 8-bit permutation.\\

The idea of generalising the classical differential attack to new operations, in particular when applied to a cipher utilising XOR-based key addition, is not new. 
For instance, Berson introduces the concept of modular difference to analyse the MD/SHA family of hash functions, and a similar approach has been employed to cryptanalyse the block cipher PRESENT \cite{B92}. Borisov et al.~proposed a novel form of differential known as multiplicative differential to target IDEA~\cite{IDEA}. This led to the formulation of $c$-differential uniformity \cite{CDIFF}, which has been extensively explored in recent years, though its cryptographic implications on attacking block ciphers remain a matter of ongoing discussion due to the challenge in understanding how difference propagation happens through all the layers of the cipher~\cite{DANIELE}.
Te{\c{s}}eleanu considered the case of SPNs and operations coming from quasigroups~\cite{tecseleanu2022security}.

Calderini et al.~\cite{calderini2021properties}, with the idea of designing a technique in which the attacker embeds the cipher in an isomorphic copy of the affine group $\AGL(M)$, provided an explicit construction of a family of operations coming from elementary abelian regular subgroups of $\AGL(M)$ or, in short, \emph{translation groups}.
A series of papers have shown that the operations coming from translation groups can be employed in an attack, called \emph{alternative differential cryptanalysis}, against XOR-based ciphers resistant to classical differential attacks~\cite{CBS19,calderini2024differential,calderini2024optimal}. 
In precise terms, it is demonstrated that certain operations, represented by $\circ$ and induced by translation groups, allow for a higher predictability of the difference between many pairs of messages of the form $(x, x\circ \Delta)$ with a fixed difference $\Delta$. This predictability is higher when computing the difference after encryption with respect to $\circ$ compared to replacing $\circ$ with XOR (denoted later simply by +) for the same differences.\\

This paper extensively explores the algebraic aspects associated with these operations. We recall the properties that the operations coming from translation group must satisfy in order to make the \emph{alternative} differential attack feasible, and turn them into axioms for an algebraic structure, that we call \emph{binary bi-brace}. The resultant algebraic structure is a particular instance of \textit{skew (left) braces} \cite{GV16}, which generalise \textit{(left) braces} initially introduced by Rump \cite{Ru07} as an extension of radical rings to investigate nondegenerate involutive set-theoretic solutions of the Yang–Baxter equation~\cite{MR1722951}. A comprehensive examination of skew braces' properties followed due to their prevalence across various mathematical domains. Skew braces can be described in terms of regular subgroups of the holomorph, as discussed in many works~\cite{Ca05,CR09,GV16,CARANTI2020647}. Furthermore, the exploration of skew braces is intimately connected to the study of Hopf–Galois structures on finite Galois extensions~\cite{SV18,ST23a}.

We make explicit the connection between the family of binary bi-braces $(R,+,\circ)$, the family of translation groups characterised by a mutual normalisation property (as in Calderini et al.), and a family of alternating algebras of nilpotency class two $(R,+,\cdot)$, insisting on the same binary vector space structure $(R,+)$. 
Moreover, building on an observation in Civino et al.~\cite{CBS19}, we frame the investigation within the context where differences propagate with the highest probability through the key-addition layer of the cipher, i.e.\ when $\dim(R\cdot R) = 1$. In this setting, we provide a constructive characterisation of the diffusion layers for which the attack becomes feasible, which in algebraic terms corresponds to understanding the automorphism group of the algebra. 

\subsection*{Organisation of the paper}
The remainder of the paper is organised as follows. 
In Sect.~\ref{sec:block}, following the definition of a family of block ciphers, we briefly review the assumptions underlying the classical and alternative forms of the differential attack, where the latter employs operations from other translation groups.
In Sect.~\ref{sec:braces} we introduce binary bi-braces and prove their equivalence with other algebraic structures. We elucidate the significance of the knowledge of the automorphism groups of these structures in cryptography. Particularly, binary bi-braces are proven to be equivalent to alternating algebras of nilpotency class two, discussed in a broader context in Sect.~\ref{sec:algebras} utilising bilinear maps.
Building upon the characterisation of isomorphisms between alternating algebras established in Sect.~\ref{sec:algebras} (refer to Theorem~\ref{congruent}) and their automorphisms (see~Corollary~\ref{cor:aut}), in Sect.~\ref{sec:binalg} we describe the automorphism group of a binary bi-brace specifically when $\dim(R^2)=1$ (see~Corollary~\ref{SpFix}), which is the pivotal case for applications. We also prove how to extend the knowledge of the automorphism group of a binary bi-brace to the automorphism group of the direct product of such structures (see~Theorem~\ref{thm:parallelgen} and Theorem~\ref{thm:parallelone}).
We conclude the paper with Sect.~\ref{sec:concl}, containing some cryptographic considerations regarding the outcomes of our research.


\section{Motivation and preliminaries on block ciphers}\label{sec:block}
In this section, we  formalise the concept of the specific type of block ciphers that are of interest to us, and we introduce the fundamental concepts of differential cryptanalysis required for our discussion. We  elucidate the requirements for the new operation coming from a translation group to enable us to replicate the classical attack.

\subsection{SPNs and classical differential cryptanalysis}

 Let $\dim(M)=N=n\times h$ and let us write $M= R\oplus R \oplus \ldots \oplus R$ where $\dim(R) = n$ for $1\leq j\leq h$, and $\oplus$ represents the direct sum of $n$-dimensional subspaces, called \emph{bricks}. Let \[T_+(R) =\{\sigma_y : y\in R\} < \Sym(R)\]where $\sigma_y$ is such that $x\mapsto x+y$ and let $T_+(M)$ be defined accordingly. Notice that $T_+(M)$ corresponds to the direct product  $T_+(R) \times  T_+(R) \times \cdots \times T_+(R)$.

\begin{definition}\label{def:cipher}
Let $r \in \mathbb N$.
An \emph{$r$-round substitution-permutation network} over $M$ and over a key space $\mathcal K$ is a family of (encryption) functions \[\{E_k :  k \in \mathcal K\, \} \subset \Sym(M)\] such that  for each $k \in \mathcal K$ the map $E_K$ is the composition of $r$ \emph{round functions}, i.e.\ 
\[E_k = E_{k,1}\:E_{k,2}\ldots E_{k,r},\]
where $E_{k,i} = \gamma \mu \sigma_{k_i}$ and 
\begin{itemize}
\item $\gamma \in \Sym(M)$ is a nonlinear transformation which operates in a \emph{parallel way} with respect to the decomposition on $M$, acting of every brick as a permutation $\gamma'$, i.e.\ \[(x_1,x_2,\ldots,x_N)\gamma = \left((x_1,\ldots,x_{n})\gamma',\ldots,(x_{n(h-1)+1},\ldots,x_{N})\gamma'\right).\] The map $\gamma' \in \Sym(R) $ is traditionally called an \emph{s-box} and $\gamma$ the \emph{confusion layer};
\item $\mu \in \GL(M)$ is traditionally called the \emph{diffusion layer};
\item $\sigma_{k_i} \in T_+(M)$ represents the key addition, where $+$ is the usual bitwise XOR on $\mathbb F_2$. The round keys $k_i \in V$ are usually derived from the master key $k \in \mathcal K$ by means of a public algorithm, called \emph{key schedule}. 
We omit the details about key schedules here since they are not relevant to the current discussion.
\end{itemize}
The general round function for an SPN is depicted in Fig.~\ref{fig.cipher}

\begin{figure}
\centering
\scalebox{1.3}{
\begin{picture}(140,70)
\put(0,50){\line(0,1){20}}
\put(20,50){\line(0,1){20}}
\put(0,50){\line(1,0){20}}
\put(0,70){\line(1,0){20}}
\put(30,50){\line(0,1){20}}
\put(50,50){\line(0,1){20}}
\put(30,50){\line(1,0){20}}
\put(30,70){\line(1,0){20}}
\put(60,50){\line(0,1){20}}
\put(80,50){\line(0,1){20}}
\put(60,50){\line(1,0){20}}
\put(60,70){\line(1,0){20}}
\multiput(85,60)(5,0){6}{\put(0,0){\line(1,0){2}}}

\put(120,50){\line(0,1){20}}
\put(140,50){\line(0,1){20}}
\put(120,50){\line(1,0){20}}
\put(120,70){\line(1,0){20}}

\multiput(2,70)(30,0){3}{\multiput(0,0)(4,0){2}{\put(0,0){\line(0,1){4}}}}
\multiput(8,72)(30,0){3}{\multiput(0,0)(3,0){2}{\put(0,0){\line(1,0){1}}}}
\multiput(14,70)(30,0){3}{\multiput(0,0)(4,0){2}{\put(0,0){\line(0,1){4}}}}

\multiput(122,70)(4,0){2}{\put(0,0){\line(0,1){4}}}
\multiput(128,72)(3,0){2}{\put(0,0){\line(1,0){1}}}
\multiput(134,70)(4,0){2}{\put(0,0){\line(0,1){4}}}

\multiput(2,40)(30,0){3}{\multiput(0,0)(4,0){2}{\put(0,0){\line(0,1){10}}}}
\multiput(8,45)(30,0){3}{\multiput(0,0)(3,0){2}{\put(0,0){\line(1,0){1}}}}
\multiput(14,40)(30,0){3}{\multiput(0,0)(4,0){2}{\put(0,0){\line(0,1){10}}}}

\multiput(122,40)(4,0){2}{\put(0,0){\line(0,1){10}}}
\multiput(128,45)(3,0){2}{\put(0,0){\line(1,0){1}}}
\multiput(134,40)(4,0){2}{\put(0,0){\line(0,1){10}}}

\put(0,20){\line(0,1){20}}
\put(140,20){\line(0,1){20}}
\put(0,20){\line(1,0){140}}
\put(0,40){\line(1,0){140}}

\multiput(2,0)(30,0){3}{\multiput(0,0)(4,0){2}{\put(0,0){\line(0,1){20}}}}
\multiput(8,15)(30,0){3}{\multiput(0,0)(3,0){2}{\put(0,0){\line(1,0){1}}}}
\multiput(14,0)(30,0){3}{\multiput(0,0)(4,0){2}{\put(0,0){\line(0,1){20}}}}
\multiput(122,0)(4,0){2}{\put(0,0){\line(0,1){20}}}
\multiput(128,15)(3,0){2}{\put(0,0){\line(1,0){1}}}
\multiput(134,0)(4,0){2}{\put(0,0){\line(0,1){20}}}

\multiput(2,10)(30,0){3}{\multiput(0,0)(4,0){2}{\put(0,0){\circle{3}}}}
\multiput(0.5,10)(30,0){3}{\multiput(0,0)(4,0){2}{\put(0,0){\line(1,0){3}}}}
\multiput(14,10)(30,0){3}{\multiput(0,0)(4,0){2}{\put(0,0){\circle{3}}}}
\multiput(12.5,10)(30,0){3}{\multiput(0,0)(4,0){2}{\put(0,0){\line(1,0){3}}}}

\multiput(122,10)(4,0){2}{\put(0,0){\circle{3}}}
\multiput(120.5,10)(4,0){2}{\put(0,0){\line(1,0){3}}}
\multiput(134,10)(4,0){2}{\put(0,0){\circle{3}}}
\multiput(132.5,10)(4,0){2}{\put(0,0){\line(1,0){3}}}

\put(-10,60){\makebox(0,0){$\gamma$}}
\put(-10,30){\makebox(0,0){$\mu$}}
\put(-10,10){\makebox(0,0){$\sigma_{k_i}$}}

\multiput(10,60)(30,0){3}{\put(0,0){\makebox(0,0){$\gamma'$}}}
\put(130,60){\makebox(0,0){$\gamma'$}}
\end{picture}}

\caption{\label{fig.cipher}Example of 1-round encryption}
\end{figure}
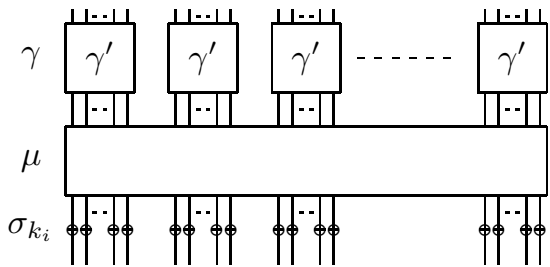
\end{definition}

The concept behind classical differential cryptanalysis involves the ability to identify a statistical bias in the distribution of the sets of differences $\{xE_k + (x+\Delta)E_k : x \in M\}$, given a certain $\Delta \in M$, across a substantial number of keys.
Specifically, referring to a pair of differences $(\Delta_I, \Delta_O)$ as a \emph{differential}, we define the \emph{differential probability} of the pair $(\Delta_I, \Delta_O)$ as
\[ \mathbb P_{x,k} \left(x E_k+(x+\Delta_I)E_k = \Delta_O\right) = \mathbb P_{x,k} \left(\Delta_I\xrightarrow[]{E_k,+}\Delta_O\right),\]
under the assumption that $x$ and $k$ are uniformly distributed. This probability represents the likelihood that, given two vectors whose difference is $\Delta_I$, the resulting difference after applying  a random $E_k$ is $\Delta_O$.
Since computing the distribution of $\{xE_k+(x+\Delta)E_k: x \in M\}$ is computationally unfeasible even for a single key, we follow the customary approach found in the literature~\cite{lai1991markov}, which involves assuming independence among the round keys and computing the probability that $\Delta_I \xrightarrow[]{E_K}\Delta_O$ as the product of the probabilities \[\prod_{i=1}^r \mathbb P_{x,k}\left( \Delta_I \xrightarrow[]{E_{k,i}} \Delta_O\right),\] where each factor is determined by the product of the probabilities of the difference propagating through the confusion, diffusion, and key-addition layers.
It is worth noticing here that, in the classical attack, only the nonlinear confusion layer gives a nontrivial contribution in terms of differential probabilities. Indeed, if $(\Delta_I, \Delta_O)$ is a differential, then 
\begin{equation}\label{eq:diffdiff}
\mathbb P_x \left(\Delta_I \xrightarrow[]{\mu} \Delta_O\right) =  \mathbb P_x \left(x \mu + (x+\Delta_I) \mu = \Delta_O \right)
\end{equation}
is 1 if $\Delta_I\mu=\Delta_O$ and 0 otherwise. Similarly, differentials propagate deterministically though the key-addition layer, being 
\begin{equation}\label{eq:keydiff}
x\sigma_{k_i} + (x+\Delta)\sigma_{k_i} = x+k_i+x+\Delta+k_i = \Delta,
\end{equation}
for each $x, k_i \in M$.
\subsection{Alternative operations from translation groups}\label{sec:crittoreq}
In this paper, we aim to classify new operations $\circ$ on $M$ in a way that enables the detection of biases in the set of differences $\{xE_k \circ (x\circ \Delta)E_k : x \in M\}$. Such biases are obtained in terms of  the \emph{differential probabilities with respect to $\circ$} that can be defined similarly to before, but with the replacement of $+$ with $\circ$.

To achieve this, we operate under the assumptions specified below~\cite{CBS19,calderini2024differential}, which are later formalised as the axioms of binary bi-braces.
\subsubsection*{Parallel operations from translation groups}
We initially assume that $\circ$ is induced by another translation group $T_\circ(M)$, denoted as $T_\circ(M) = \{\tau_a: a \in M\}$, which is elementary abelian and regular, and where $a \circ b = a\tau_b$, with $\tau_b \in T_\circ(M)$ being the unique element that maps $0$ to $b$.
Moreover, given that the cipher incorporates a confusion layer operating in a parallel way on the bricks and that bitwise key addition can similarly be viewed as a parallel process on the bricks, the operation $\circ$ is crafted at the brick level. This means it is induced by a translation group $T_\circ(R) < \Sym(R)$ and then extended to $M$ in the straightforward manner.
In this context, for the sake of simplicity in notation, we denote the (parallel) extension of $\circ$ to $M$ by the same symbol. Therefore, if $a, b \in M$, then $a \circ b$ represents the operation obtained by applying the translations of $T_\circ(R)$ to every brick of $a$ and $b$,
and we say that $\circ$ is a \emph{parallel} operation on $M$.
\subsubsection*{Affine groups} Following the methodology as in Calderini et al.~\cite{calderini2021properties}, to facilitate a constructive representation of translations in $T_\circ(R)$ using matrices, we adopt the assumption that $T_\circ(R)$ forms an affine group. Specifically, we require that $T_\circ(R)< \AGL(R,+)$. This also implies that  $T_\circ(M)< \AGL(M,+)$. 
\subsubsection*{Dual normalisation}
The aforementioned requirement can be equivalently expressed by demanding that $T_\circ(M)$ normalises $T_+(M)$. This condition alone does not address one significant limitation of the attack approach, i.e.\ the challenge of the prediction of the differential probabilities w.r.t.\ $\circ$ through the key-addition layer:
\begin{equation}\label{eq:keydiffcirc}
\mathbb P_{x,k_i} \big(x\sigma_{k_i} \circ (x\circ\Delta)\sigma_{k_i} = (x+k_i)\circ \left((x\circ\Delta)+k_i\right) = \Delta_O\big).
\end{equation}
Indeed, it is worth noting that, unlike in the classical case (see~Eq.\eqref{eq:keydiff}), the previous probability depends on both $x$ and the round key $k_i$.
The problem of unpredictability concerning the equation above can solved, as demonstrated by Civino et al.~\cite[Theorem 3.12]{CBS19}, through the introduction of a second normalisation condition.
Specifically, they require  $T_\circ(R)$ to be normalised by $T_+(R)$, i.e.\ $T_+(R) \leq N_{\Sym(R)}\left(T_\circ(R)\right)$, and therefore that $T_+(M) \leq N_{\Sym(M)}\left(T_\circ(M)\right)$. Defining $R^2$ as
$R^2 = <x \cdot y : x, y \in R >$, where 
\begin{equation}\label{eq:infoprod}
x \cdot y = x+y+x\circ y,
\end{equation} 
the dual normalisation property implies that
\begin{equation}\label{eq:Uone}
(x+k_i)\circ((x\circ\Delta)+k_i)= \Delta + k_i \cdot \Delta \in \Delta + R^2.
\end{equation}
The equation above indicates two important facts. Firstly, it demonstrates that the output difference through the key-addition layer, calculated with respect to the operation $\circ$, does not depend on the potentially unknown value of $x$. Secondly, it suggests that under the assumption that the subspace $R^2$ is small, the actual value of Eq.\eqref{eq:Uone} can be reliably predicted with high probability. Thus, the most pertinent scenario, to be explored in this paper due to its significance in cryptanalysis, occurs when $\dim(R^2)=1$, implying that the value of Eq.\eqref{eq:Uone} can be predicted with a probability of $1/2$. Importantly, for half of the inputs, this is equal to $\Delta$, just as in the classical case.\\

We have briefly discussed how the aforementioned assumptions are relevant in the cryptographic context. Additionally, it is important to note that the success of the attack, namely the ability to predict differentials with respect to $\circ$ with high probability, crucially depends on the predictability of their propagation through the diffusion layer of the cipher. Recall that in the classical context, XOR-differentials propagate through the diffusion layer with a probability of one (see\ Eq.~\eqref{eq:diffdiff}). However, now, the presence of the function $\mu \in \GL(M)$ poses a significant challenge. The diffusion layer, indeed, is not linear with respect to the operation $\circ$, necessitating the introduction of new differential probabilities.
Such probabilities have to do with the possibility to predict if $\Delta_I \xrightarrow[]{\mu,\circ} \Delta_O$, i.e.\ to compute 
\[
\mathbb P_x \big(x\mu \circ (x\circ\Delta_I)\mu = \Delta_O\big),
\]
which is expected to be very low due to the inability to isolate the action of $\mu$ at the individual brick level; instead, each $x \in M$ must be considered at the block level.
 A potential solution to this issue arises from the assumption that the diffusion layer of the cipher behaves linearly with respect to both the operations $+$ and $\circ$. This ensures the validity of the result in Eq.~\eqref{eq:diffdiff} even when $+$ is replaced by $\circ$. For this reason, determining which maps satisfy the condition of linearity with respect to both operations is essential. 
From the perspective of a cryptanalyst, these maps could serve as trapdoor diffusion layers within the cipher, particularly in relation to a potential differential attack exploiting the operation $\circ$. Consequently, understanding their characteristics holds significant importance.
 In Sect.~\ref{sec:binalg}, for the benefit of cryptanalysts, we provide a characterisation of linear maps with respect to $+$ and $\circ$, where $\circ$ is an operation defined on $R$ satisfying $\dim(R^2)=1$, which is then extended to $M$ in a parallel way.


\section{On braces and binary structures}\label{sec:braces}
As outlined in Section~\ref{sec:crittoreq}, the operation employed for the alternative differential attacks stems from pairs of translation groups (one of which is fixed to be $T_+(R)$) featuring a mutual normalisation property. We introduce the family of such groups, along with a set of diverse algebraic constructions that we show to be equivalent.
One equivalent representation of these operations is through binary bi-braces, that are defined shortly. Let us first revisit the definitions of braces, starting from a nonempty set $B$.

\begin{definition}[\cite{GV16}]
 A \emph{skew (left) brace} is a triple $(B,+,\circ)$ such that $(B,+)$ and $(B,\circ)$ are (not necessarily abelian) groups and for each $a,b,c \in B$
\begin{align}\label{sbdistr}
	a\circ(b+c)=(a\circ b)-a+(a\circ c).
\end{align}
The groups $(B,+)$ and $(B,\circ)$ are respectively called the \emph{additive} and \emph{multiplicative} group of the skew brace $B$.
\end{definition}
As a consequence of Eq.~\eqref{sbdistr}, the neutral elements of
$(B,+)$ and $(B,\circ)$ coincide and are denoted here by $0$. By $a^{-1}$ we denote the inverse of $a \in B$ with respect to the operation $\circ$.
The following definition was given by Childs~\cite{Ch19} and subsequently studied by different authors~\cite{CARANTI2020647,CS21,ST23}.
\begin{definition}[\cite{Ch19}]
A skew brace $(B,+,\circ)$ is called a \emph{bi-skew brace} if also $(B,\circ,+)$ is  a skew brace, i.e.\ for each $a,b,c \in B$
\[a+(b\circ c)=(a+b)\circ a^{-1}\circ(a+c).\]
A skew brace with an abelian additive group is called a \emph{(left) brace}. 
Similarly, a \emph{(right) brace} can be defined replacing the previous condition by
\[(a+b)\circ c=(a\circ c)-c+(b\circ c).\]
A \emph{(two-sided) brace} is a left brace $(B,+,\circ)$ that is also a right brace. 
\end{definition}
Clearly, a brace with an abelian multiplicative group $(B,\circ)$ is a two-sided brace.

\begin{definition}[\cite{CCS19}]\label{def:fbra}
Let $F$ be a field. A \emph{(left) $F$-brace} $(B,+,\circ)$ is a brace whose additive group possesses a vector space structure over $F$ and  for $a,b,c\in B,\,\delta\in F$
\begin{align}\label{fbrace}
	\delta(a\circ b)=\left(a\circ (\delta b)\right)+(\delta-1)a.
\end{align}
\end{definition}
It is also worth mentioning that the concept of a brace over a field has recently been generalised through the introduction of a \emph{module brace} over a ring~\cite{DC23}.

Let us revisit some established results regarding the equivalence between braces and other algebraic structures.

\subsection{Some known equivalences}

Let us start by recalling the concept that a ring $R$ (respectively an algebra $R$ over a field $F$) is radical if every element is invertible with respect to the circle operation $x\circ y=x+y+x\cdot y$, i.e.\ if $(R,\circ)$ is a group. The best known examples of radical rings (resp.\ radical algebras) are the nilpotent rings (resp.\ nilpotent algebras), where a ring $R$ (resp.\ an $F$-algebra $R$) is nilpotent if there exists a positive
integer $c$ such that every product of $c+1$ elements from $R$ is $0$, and the smallest $c$ with this property is called the nilpotency class of $R$. In particular, if $c$ is the nilpotency class of $R$, then $R^{c+1} = 0$ and $R^c\neq 0$.

Caranti et al.~\cite{Ca05} obtained a simple description of the abelian regular subgroups of the affine group $\AGL(R)$ in terms of commutative associative radical $F$-algebras $(R,+,\cdot)$ with underlying vector space $R$. 

\begin{theorem}{\cite[Theorem 1]{Ca05}}\label{tca05}
	Let $R$ be a vector space over a field $F$. Denote by \emph{\texttt{RA}} the family of commutative associative radical $F$-algebras with underlying vector space $R$ and by \emph{\texttt{T}} the family of all abelian regular subgroups of the affine group $\agl(R)$.
	\begin{enumerate}
		\item  If $(R,+,\cdot)$ is a commutative associative radical $F$-algebra with underlying vector space $R$, then $T_\circ(R)=\{\tau_y:\,y\in R\}$, where $\tau_y:R\longrightarrow R,\ x\mapsto x\circ y$ is an abelian regular subgroup of the affine group $\agl(R)$.
		\item  The map
		\[f: \text{\emph{\texttt{RA}}}\longrightarrow\text{\emph{\texttt{T}}},\ (R,+,\cdot)\longmapsto T_\circ(R)\]
		is a bijection. Under this correspondence, isomorphism classes of commutative associative $F$-algebras correspond to conjugacy classes under the action of $\gl(R)$ of abelian regular subgroups of $\agl(R)$.
	\end{enumerate}
\end{theorem}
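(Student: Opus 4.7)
The plan is to verify each direction of the stated correspondence and then identify the two constructions as mutually inverse and compatible with the equivalences (algebra isomorphism on one side, $\GL(R)$-conjugation on the other).

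For part (1), with $(R,+,\cdot)$ as given, bilinearity of $\cdot$ makes each $\tau_y\colon x\mapsto x+y+x\cdot y$ the composition of the $F$-linear map $x\mapsto x+x\cdot y$ with the translation by $y$, so $\tau_y\in\AGL(R)$. A direct expansion using bilinearity and associativity of $\cdot$ shows $\tau_y\tau_z=\tau_{y\circ z}$, hence $y\mapsto\tau_y$ is a group homomorphism from $(R,\circ)$ into $\AGL(R)$ whose image is exactly $T_\circ(R)$. Since $(R,+,\cdot)$ is radical, $(R,\circ)$ is a group, so $T_\circ(R)$ is a subgroup; commutativity of $\cdot$ makes $\circ$ commutative, whence $T_\circ(R)$ is abelian; regularity follows because $x_1\circ y=x_2$ has a unique solution in the $\circ$-group.

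The substance of the theorem lies in constructing the inverse of $f$. Given an abelian regular $T\leq\AGL(R)$, regularity provides, for each $y\in R$, a unique $\tau_y\in T$ with $0\tau_y=y$; write its affine decomposition as $\tau_y(x)=xA_y+y$ with $A_y\in\GL(R)$. Define $x\circ y:=x\tau_y$ and $x\cdot y:=x(A_y-I)$, so that $x\circ y=x+y+x\cdot y$ tautologically, and transporting the group law of $T$ makes $(R,\circ)$ a group. The main obstacle is showing that $\cdot$ is bilinear: linearity in the first variable is immediate, but linearity in the second variable must be extracted from the structure of $T$. The key observation, which I would isolate as a lemma, is that commutativity of $T$ forces commutativity of $\cdot$: expanding both $\tau_y\tau_z$ and $\tau_z\tau_y$ in affine form and comparing translation parts gives $y(A_z-I)=z(A_y-I)$, i.e.\ $y\cdot z=z\cdot y$. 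Once symmetry is in hand, bilinearity follows for free, since the identity $y\cdot z=z(A_y-I)$ already exhibits linearity in $z$. Associativity of $\cdot$ is then deduced from associativity of $\circ$: expanding $x\circ(y\circ z)=(x\circ y)\circ z$ via $a\circ b=a+b+a\cdot b$ and cancelling terms using bilinearity leaves $x\cdot(y\cdot z)=(x\cdot y)\cdot z$. Radicality is automatic, since $(R,\circ)$ is a group by construction.

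That the two constructions are mutually inverse is immediate by inspection: in either round-trip the element $\tau_y$ and the product $\cdot$ are pinned down by the same affine datum $A_y$. For the final claim about equivalence classes, I would verify directly that a linear bijection $\phi\in\GL(R)$ is an $F$-algebra isomorphism between $(R,+,\cdot)$ and $(R,+,\cdot')$ if and only if $\phi^{-1}\tau_y\phi=\tau'_{y\phi}$ for every $y\in R$. Computing $x(\phi^{-1}\tau_y\phi)$ via the explicit formula and invoking the algebra-isomorphism condition $(a\cdot b)\phi=(a\phi)\cdot'(b\phi)$ transforms one identity into the other; hence algebra isomorphisms correspond exactly to the $\GL(R)$-conjugations taking $T_\circ(R)$ to $T_{\circ'}(R)$, giving the asserted bijection on equivalence classes.
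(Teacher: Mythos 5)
The paper does not prove this statement at all---it is quoted from \cite{Ca05}---so there is no internal proof to compare against; your reconstruction is essentially the standard argument of that cited source (affine decomposition $x\tau_y=xA_y+y$, commutativity of $T$ forcing $y\cdot z=z\cdot y$ and hence bilinearity, associativity of $\cdot$ transported through $\circ$, and the conjugation/isomorphism dictionary), and it is correct. The only steps left implicit are routine: in part (1) the linear part $x\mapsto x+x\cdot y$ is invertible because $\tau_y$ has inverse $\tau_{y^{-1}}$ in the $\circ$-group, and for the equivalence-class statement one should note that any $\phi\in\gl(R)$ conjugating $T_\circ$ onto $T_{\circ'}$ necessarily sends $\tau_y$ to $\tau'_{y\phi}$ (evaluate at $0$ and use regularity), which then feeds into your displayed identity.
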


Additionally, a bijective correspondence exists between radical rings and two-sided braces, as demonstrated by Rump (see~also Ced\'{o}~\cite{Ce18}).

\begin{theorem}{\cite[Proposition 2.4]{Ce18}}\label{Rump}
	If $(B,+,\circ)$ is a two-sided brace, then $(B,+,\cdot)$ is a radical ring, where
	$x\cdot y=x\circ y -x-y$ for $x,y\in B$.
	Conversely, if $(B,+,\cdot)$ is a radical ring, then $(B,+,\circ)$ is a two-sided brace, where $x\circ y=x+y + x\cdot y$ for $x,y\in B$.
\end{theorem}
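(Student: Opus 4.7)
The plan is to prove the two directions of the equivalence and then check that the two constructions are mutually inverse.

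For the forward direction, starting from a two-sided brace $(B,+,\circ)$, define $x \cdot y \deq x \circ y - x - y$. The additive group $(B,+)$ is already abelian by the definition of left brace, so I only need to extract the ring-theoretic data: left distributivity, right distributivity, associativity of $\cdot$, and the radical property. I would obtain left distributivity by substituting $x\circ y = x + y + x\cdot y$ into the left-brace axiom $a\circ(b+c) = (a\circ b) - a + (a\circ c)$ and cancelling; this should collapse directly to $a\cdot(b+c) = a\cdot b + a\cdot c$. Symmetrically, plugging the same substitution into the right-brace axiom yields $(a+b)\cdot c = a\cdot c + b\cdot c$. For associativity of $\cdot$, I would expand both sides of $x\circ(y\circ z) = (x\circ y)\circ z$ using the definition of $\circ$ in terms of $\cdot$ and apply the two distributivities just established; after cancellation only $x\cdot(y\cdot z) = (x\cdot y)\cdot z$ survives. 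Finally, because $(B,\circ)$ is by hypothesis a group and $x\circ y = x + y + x\cdot y$, the adjoint (circle) group of $(B,+,\cdot)$ is a group, which is exactly the definition of a radical ring.

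For the reverse direction, starting from a radical ring $(B,+,\cdot)$, define $x \circ y \deq x + y + x\cdot y$. The group $(B,\circ)$ exists by the radical assumption, and associativity of $\circ$ follows from associativity of $\cdot$ via a short expansion using both distributivities, mirroring the computation above. The additive group $(B,+)$ is abelian (rings have abelian additive groups). To verify the left-brace law $a\circ(b+c) = (a\circ b) - a + (a\circ c)$, expand both sides using $x\circ y = x+y+x\cdot y$ and apply left distributivity of $\cdot$; the right-brace law follows identically using right distributivity. Hence $(B,+,\circ)$ is a two-sided brace.

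The final step is to verify that the two maps are mutually inverse. If one starts with a brace, forms the ring with $x \cdot y = x \circ y - x - y$, and then reconstructs the product $x \circ' y = x + y + x\cdot y$, one gets $x \circ' y = x + y + (x\circ y - x - y) = x\circ y$. The opposite composition is equally immediate. I do not anticipate a genuine obstacle here: the entire argument is algebraic manipulation driven by the interaction between the two distributivities and the identity $x\circ y = x + y + x\cdot y$. The only point requiring care is bookkeeping the signs and using commutativity of $+$ during the associativity calculation for $\cdot$, since the intermediate expressions contain several quadratic terms that must be identified pairwise before the cubic terms on each side can be equated.
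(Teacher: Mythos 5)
Your proposal is correct: the two distributivities drop out of the left- and right-brace axioms after substituting $x\circ y=x+y+x\cdot y$, associativity of $\cdot$ follows from associativity of $\circ$ once both distributivities are available, the radical property is exactly the statement that $(B,\circ)$ is a group, and the converse and the mutual-inverse check go through as you describe. Note that the paper itself gives no proof of this statement — it is quoted from Rump's correspondence as presented in C\'edo's survey — and your argument is precisely the standard verification given there, so there is nothing to compare beyond saying you have correctly reconstructed it.
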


Notice that for a two-sided $F$-brace Eq.~\eqref{fbrace} is equivalent to the compatibility with scalars of the corresponding radical $F$-algebra, i.e. $\delta(x\cdot y)=x\cdot(\delta y)=(\delta x)\cdot y$. 

The result of Theorem~\ref{tca05} has been extended by Catino and Rizzo~\cite{CR09}, who established a link between (not necessary abelian) regular subgroups of the affine group $\agl(R)$ and $F$-braces structures with the underlying vector space $R$.

\begin{theorem}{\cite{CR09}}\label{thm:CR}
	Let $R$ be a vector space over a field $F$. Denote by \emph{\texttt{RB}} the family of $F$-braces with underlying vector space $R$ and by \emph{\texttt{T}} the family of all regular subgroups of the affine group $\agl(R)$.
	\begin{enumerate}
		\item  If $(R,+,\circ)$ is an $F$-brace with underlying vector space $R$, then $T_\circ(R)=\{\tau_y:\,y\in R\}$, where $\tau_y:R\longrightarrow R,\, x\mapsto x\circ y$ is a regular subgroup of the affine group $\agl(R)$.
		\item  The map
		\[f: \text{\emph{\texttt{RB}}}\longrightarrow\text{\emph{\texttt{T}}},\, (R,+,\circ)\mapsto T_\circ(R)\]
		is a bijection. Under this correspondence, isomorphism classes of $F$-braces correspond to conjugacy classes under the action of $\gl(R)$ of regular subgroups of $\agl(R)$.
	\end{enumerate}
\end{theorem}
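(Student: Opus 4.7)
The plan is to establish the correspondence explicitly via the matrix--vector decomposition of elements of $\agl(R)$: every $\tau \in \agl(R)$ can be written as $\tau(x) = xA_\tau + v_\tau$ with $A_\tau \in \gl(R)$ and $v_\tau \in R$, so the theorem amounts to translating brace-theoretic identities into conditions on the pair $(A_\tau, v_\tau)$, and vice versa.

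For part (1), the key step is showing that each $\tau_y$ is $F$-affine. Since $0 \circ y = y$, I would write $\tau_y(x) = xA_y + y$, so the question reduces to proving $A_y \in \gl(R)$. The brace axiom~\eqref{sbdistr}, combined with commutativity of $(R,+)$, gives additivity of $x \mapsto \tau_y(x) - y$; the $F$-brace scalar law~\eqref{fbrace} upgrades this to $F$-linearity. Once affineness is established, the remaining properties are formal: associativity of $\circ$ yields $\tau_y \tau_z = \tau_{y \circ z}$, so $T_\circ(R)$ is closed under composition with identity $\tau_0 = \mathrm{id}$ and inverse $\tau_y^{-1} = \tau_{y^{-1}}$; regularity is immediate because the right-translation action of the group $(R, \circ)$ on itself is regular.

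For part (2), I would build the inverse of $f$ directly. Given a regular subgroup $T \le \agl(R)$, regularity provides for each $y \in R$ a unique $\tau_y \in T$ with $0\tau_y = y$; setting $x \circ y \deq x\tau_y$ makes $(R,\circ)$ a group isomorphic to $T$. Decomposing $\tau_y = (A_y, y)$, the closure identity $\tau_y \tau_z = \tau_{y \circ z}$ unpacks to
\[
	y \circ z = y A_z + z, \qquad A_{y \circ z} = A_y A_z.
\]
The brace axiom~\eqref{sbdistr} and the $F$-brace scalar law~\eqref{fbrace} then follow by direct substitution, once one checks the compatibility identities $A_{b+c} = A_b + A_c - I$ and $A_{\delta b} = \delta A_b + (1-\delta) I$ relating $y \mapsto A_y$ to the $F$-vector space structure; equivalently, $y \mapsto A_y - I$ is $F$-linear from $R$ to $\mathrm{End}(R)$. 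Injectivity of $f$ is immediate since $T_\circ(R)$ determines each $\tau_y$ as the unique element sending $0$ to $y$.

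For the conjugacy--isomorphism correspondence, an $F$-brace isomorphism $\phi:(R,+,\circ) \to (R,+,\circ')$ is by definition $F$-linear (hence lies in $\gl(R)$) and intertwines $\circ$ with $\circ'$; the condition $\phi(x \circ y) = \phi(x) \circ' \phi(y)$ rewrites as $\phi^{-1} \tau_y \phi = \tau'_{\phi(y)}$, i.e.\ $\phi^{-1} T_\circ(R) \phi = T_{\circ'}(R)$, and reading the chain backwards yields the converse. I expect the main obstacle to be the backward direction of part~(2): whereas the composition formula $y \circ z = yA_z + z$ comes for free, deriving the compatibility identities on $y \mapsto A_y$ requires carefully exploiting that $T$ is a subgroup of the \emph{affine} group (not merely of $\Sym(R)$), so that the $F$-linear structure of $R$ interacts with the multiplication in $T$ in exactly the way encoded by the brace axioms.
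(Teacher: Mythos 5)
The paper gives no proof of Theorem~\ref{thm:CR}: it is quoted verbatim from Catino--Rizzo \cite{CR09} as a known result, so your attempt can only be judged on its own merits. Your overall architecture is the standard one and its formal parts are fine: parametrising a regular subgroup by $y=0\tau_y$, defining $x\circ y=x\tau_y$, getting $\tau_y\tau_z=\tau_{y\circ z}$ from regularity, recovering injectivity of $f$, and translating brace isomorphisms into $\gl(R)$-conjugacy via $\phi^{-1}\tau_y\phi=\tau'_{\phi(y)}$ are all correct steps.

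The genuine gap is in how you connect the brace axioms to affineness, and it sits exactly where you predicted the ``main obstacle'' would be. The axiom \eqref{sbdistr} is the \emph{left}-brace law: it gives additivity of $x\mapsto y\circ x-y$, i.e.\ of \emph{left} multiplications, and commutativity of $(R,+)$ cannot be used to switch the sides of $\circ$. The maps in the statement, $\tau_y\colon x\mapsto x\circ y$, are \emph{right} multiplications, and in a left $F$-brace that is not a right brace these are not affine (such examples exist: over $\mathbb F_p$ every left brace with elementary abelian additive group satisfies \eqref{fbrace} automatically, and non-two-sided left braces abound), so your derivation in part (1) fails as written. Dually, in part (2) the ``compatibility identities'' $A_{b+c}=A_b+A_c-I$ and $A_{\delta b}=\delta A_b+(1-\delta)I$ — linearity of $y\mapsto A_y-I$ — do \emph{not} follow from $T\leq\agl(R)$ being regular; they are genuinely extra conditions (essentially two-sidedness of the resulting brace), whereas what affineness of $\tau_c$ gives for free is the opposite-side law $(a+b)\circ c=(a\circ c)-c+(b\circ c)$ together with the scalar law in the left argument. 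So the plan of ``deriving'' those identities cannot succeed. The repair is to resolve the left/right convention mismatch between Definition~\ref{def:fbra} and the formula for $\tau_y$ (a slip already present in the paper's transcription, harmless in its commutative binary setting): either take $\tau_y$ to be the left multiplication $x\mapsto y\circ x=\lambda_y(x)+y$, in which case \eqref{sbdistr} and \eqref{fbrace} say precisely that $\lambda_y\in\gl(R)$ and both directions become immediate with no identity on $y\mapsto A_y$ beyond $A_{y\circ z}=A_yA_z$, or keep $\tau_y\colon x\mapsto x\circ y$ and work with the right-brace form of the axioms as in the original source.
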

What we have recalled in Theorems~\ref{tca05}, \ref{Rump}, and \ref{thm:CR}  can be summarised as follows.
\begin{remark}
	 Given a vector space $R$ over a field $F$, any of the following data uniquely determines the others:
	\begin{enumerate}
		\item  a regular subgroup $T_{\circ}<\agl(R)$;
		\item an $F$-brace $(R,+,\circ)$;
		\item an associative radical $F$-algebra $(R,+,\cdot)$.
	\end{enumerate}
\end{remark}

It is well known that the specification of a skew brace with additive group $(B,+)$ corresponds to the choice of a regular subgroup of the holomorph of $(B,+)$~\cite{GV16,CARANTI2020647}. Similarly, specifying a bi-skew brace corresponds to selecting a regular subgroup of the holomorph that is normalised by the image of the right regular representation (which we denote here by $T_+$), as stated in the following theorem due to Caranti~\cite[Theorem 3.1]{CARANTI2020647}.

\begin{theorem}{\cite[Theorem 3.1]{CARANTI2020647}}\label{Tcaranti}
	Let $(B,+)$ be a group. The following data are equivalent:
	\begin{enumerate}
		\item  a bi-skew brace $(B,+,\circ)$;
		\item  a regular subgroup $N < \mathrm{Hol}(B,+)$ which is normalised by $T_+(B)$.
	\end{enumerate}
\end{theorem}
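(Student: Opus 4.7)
The plan is to apply the correspondence between skew braces and regular subgroups of the holomorph (the skew-brace analogue of Theorem~\ref{thm:CR}, due to Guarnieri--Vendramin) twice, exploiting the fact that a bi-skew brace is a single underlying set carrying \emph{two} skew-brace structures obtained by interchanging the roles of the two operations, together with the standard identification of $\mathrm{Hol}(G)$ as the normaliser in $\Sym(G)$ of any regular copy of $G$.

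For the direction $(1)\Rightarrow(2)$, I start with the bi-skew brace $(B,+,\circ)$. Guarnieri--Vendramin applied to the skew brace $(B,+,\circ)$ produces a regular subgroup $N<\mathrm{Hol}(B,+)$, realised concretely as the image of the regular representation of $(B,\circ)$ on $B$; in particular $N$ is a regular copy of $(B,\circ)$ inside $\Sym(B)$, so by the standard identification $\mathrm{Hol}(B,\circ)=N_{\Sym(B)}(N)$. Now the bi-skew hypothesis says that $(B,\circ,+)$ is \emph{also} a skew brace, so applying the correspondence a second time with $(B,\circ)$ in the role of additive group yields a regular subgroup $\tilde N<\mathrm{Hol}(B,\circ)$. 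A direct unpacking, using the formula $\tilde\lambda_a(x)=a^{-1}\circ(a+x)$ for the $\lambda$-map of the dual skew brace, shows that the $a$-th element of $\tilde N$ acts as $x\mapsto a\circ\tilde\lambda_a(x)=(a\circ a^{-1})\circ(a+x)=a+x$, so $\tilde N=T_+(B)$. Combining the two pieces, $T_+(B)=\tilde N\subseteq\mathrm{Hol}(B,\circ)=N_{\Sym(B)}(N)$, which is exactly the statement that $T_+(B)$ normalises $N$.

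For the reverse direction $(2)\Rightarrow(1)$ the argument runs backwards. The Guarnieri--Vendramin correspondence turns $N$ into a skew brace $(B,+,\circ)$, and the hypothesis that $T_+(B)$ normalises $N$ reads as $T_+(B)\subseteq N_{\Sym(B)}(N)=\mathrm{Hol}(B,\circ)$, so $T_+(B)$ is a regular subgroup of $\mathrm{Hol}(B,\circ)$. Feeding this back into the correspondence, with $(B,\circ)$ as additive group, endows $(B,\circ)$ with a multiplicative operation $\star$ given by $a\star b=t_a(b)$, where $t_a$ is the unique element of $T_+(B)$ sending $0$ to $a$. Since $t_a(b)=a+b$, this $\star$ is exactly $+$, so the resulting skew brace is $(B,\circ,+)$, which is precisely the bi-skew condition on $(B,+,\circ)$.

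The principal obstacle in writing this up carefully is notational rather than conceptual: one has to pin down a single convention for the regular representation (left vs.\ right) used in the Guarnieri--Vendramin correspondence and then verify consistently that the dual correspondence picks out $T_+(B)$ on the nose via the calculation $a\circ\tilde\lambda_a(x)=a+x$. Once that bookkeeping is in place, the theorem reduces to the structural statement that the bi-skew property is the permutation-theoretic assertion that $T_+(B)$ and $N$ sit inside $\Sym(B)$ as two regular subgroups, each contained in the normaliser of the other.
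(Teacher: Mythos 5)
The paper does not prove this statement at all: it is imported verbatim as \cite[Theorem 3.1]{CARANTI2020647}, so there is no in-paper argument to compare against. Judged on its own, your proposal is correct in substance and is essentially the natural structural proof: the bi-skew condition is exactly the assertion that $T_+(B)$ and $N$ are two regular subgroups of $\Sym(B)$ each lying in the normaliser of the other, and you obtain this by applying the Guarnieri--Vendramin correspondence twice, once with $(B,+)$ and once with $(B,\circ)$ as additive group, together with the identification $N_{\Sym(B)}(N)=\mathrm{Hol}(B,\circ)$ (which is legitimate here because both operations share the neutral element $0$, giving a canonical base point). Caranti's published proof is a more computational verification via $\gamma$-functions, so your route is a cleaner packaging of the same content rather than a genuinely different theorem. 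The one point you should not dismiss as mere bookkeeping is the left/right issue: with the convention $\lambda_a(x)=-a+(a\circ x)$ your second application of the correspondence produces the \emph{left} translations $x\mapsto a+x$, whereas the paper declares $T_+$ to be the right regular representation; for nonabelian $(B,+)$ these differ, and landing on the opposite operation is not harmless, since $(B,\circ,+)$ being a skew brace is not equivalent in general to $(B,\circ,+^{\mathrm{op}})$ being one. So a careful write-up must fix one convention throughout (or state the theorem with the matching regular representation); in the paper's actual use case $(B,+)$ is elementary abelian, so the distinction disappears.
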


\subsection{More equivalences}
The results reviewed in the previous section illustrate the interconnection between the families of regular subgroups $T_{\circ}<\agl(R)$, $F$-braces $(R,+,\circ)$, and associative radical $F$-algebras $(R,+,\cdot)$. We aim to emphasise a similar equivalence when focusing on the family of regular subgroups $T_{\circ}<\agl(R)$ with the dual normalization property, as defined in Sec.\ref{sec:crittoreq}, which triggers the alternative differential attack. It is noteworthy here that the product defined in Eq.\eqref{eq:infoprod}, depicting the discrepancy between the expected difference after the key-addition layer in the case of XOR and $\circ$ differentials (see~Eq.\eqref{eq:keydiff} and Eq.~\eqref{eq:keydiffcirc}), is the product of an $F$-algebra $(R,+,\cdot)$ that we soon characterise.
With this in mind, we recall that Childs gave examples of nontrivial bi-skew braces generated from nilpotent algebras of class two~\cite{Ch19}.

\begin{theorem}{\cite[Proposition 4.1]{Ch19}}\label{Tchilds}
	Let $(R,+,\cdot)$ be a nilpotent $\mathbb{F}_p$-algebra of $\mathbb{F}_p$-dimension $n$. Define the circle operation on $R$ by
	\[x\circ y=x\cdot y+x+y.\]
	Then $(R,+,\circ)$ is a bi-skew brace if and only if $R^3=0$ (i.e.\ for every $x,y,z\in R$, $x\cdot y\cdot z=0$).
\end{theorem}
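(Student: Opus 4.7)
My plan is to first observe that $(R,+,\circ)$ is already a (two-sided) brace: since $(R,+,\cdot)$ is nilpotent and associative, it is in particular a radical ring, so Theorem~\ref{Rump} applies and makes $(R,+,\circ)$ a skew brace (its additive group, being the underlying vector space, is abelian). The bi-skew condition therefore reduces to checking the single axiom that $(R,\circ,+)$ is also a skew brace, i.e.\
\[
a+(b\circ c)=(a+b)\circ a^{-1}\circ(a+c)\qquad \text{for all } a,b,c\in R.
\]

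I would then expand both sides of this identity directly. Using $a\circ a^{-1}=0$ and distributivity of $\cdot$ over $+$, a short computation gives $(a+b)\circ a^{-1}=b+b\cdot a^{-1}$; composing this with $(a+c)$ and comparing with the left-hand side $a+b+c+b\cdot c$, the shared summands cancel and the identity collapses to
\[
b\cdot a+b\cdot a^{-1}+(b\cdot a^{-1})\cdot a+(b\cdot a^{-1})\cdot c=0\qquad\text{for all } a,b,c\in R, \qquad(\star)
\]
which I would then analyse in both directions.

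For sufficiency, assume $R^3=0$: the two triple products in $(\star)$ vanish for free, and the equation $a+a^{-1}+a\cdot a^{-1}=0$ combined with $R^3=0$ yields the closed form $a^{-1}=-a+a^2$ by one round of back-substitution; then $b\cdot a^{-1}=-b\cdot a$, since $b\cdot a^2\in R^3=0$, and the first two summands of $(\star)$ cancel as well. For necessity I would evaluate $(\star)$ at $c=0$: the last summand drops out, forcing the first three to sum to zero, and subtracting from the general case yields $(b\cdot a^{-1})\cdot c=0$ for all $a,b,c\in R$. Since $a\mapsto a^{-1}$ is a bijection of $R$, this is precisely $R^3=0$. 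The whole argument is a direct computation; the only mildly technical steps are the bookkeeping of the non-commutative expansion leading to $(\star)$ and the derivation of the formula $a^{-1}=-a+a^2$ under $R^3=0$, after which both implications follow by specialising $(\star)$ at the appropriate arguments.
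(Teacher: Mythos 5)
Your argument is correct, and I checked the key computations: the reduction of the bi-skew axiom $a+(b\circ c)=(a+b)\circ a^{-1}\circ(a+c)$ to the identity $(\star)$ is valid (it only uses distributivity and $a+a^{-1}+a\cdot a^{-1}=0$), the closed form $a^{-1}=-a+a^2$ under $R^3=0$ is right, and the necessity step (specialise at $c=0$, subtract, use that $\circ$-inversion is a bijection) does yield $R^3=0$. Note, however, that the paper does not prove Theorem~\ref{Tchilds} itself (it is quoted from Childs); its own proof of the analogous statement is the field-general theorem in Sect.~\ref{sec:algebras}. Compared with that proof, your route differs mainly in the forward direction: the paper first invokes Theorem~\ref{Rump} (as you do) but then passes to the decomposition $R=V\oplus\Ann(R)$ and verifies the skew-brace identity in coordinates via the associated bilinear map $\phi$, writing $(x_1,x_2)^{-1}=(-x_1,-x_2+\phi(x_1,x_1))$, whereas you stay inside the ring and use $a^{-1}=-a+a^2$; your version is more elementary, needs no annihilator/complement machinery, and works verbatim over any field, while the paper's coordinate form is what it reuses later for the matrix description of $\lambda$ and the automorphism computations. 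Your necessity direction is essentially the paper's converse (expand, cancel, arrive at a triple product $(b\cdot a^{-1})\cdot c=0$, conclude via bijectivity of inversion), though you get there from the single identity $(\star)$, which pleasantly unifies both implications; in fact, using associativity and $a^{-1}\cdot a=-a-a^{-1}$ the first three summands of $(\star)$ cancel identically, so $(\star)$ is outright equivalent to $(b\cdot a^{-1})\cdot c=0$, which would streamline both of your directions further.
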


Stefanello and Trappeniers expanded the previous theorem to apply to radical rings~\cite[Theorem 3.13 and Example 3.16]{ST23}. We present a convenient version of their result below. Later, in Sect.~\ref{sec:algebras}, we demonstrate a version of Theorem~\ref{theoremST23} tailored for a finite-dimensional algebra $R$ over a field $F$, utilising a suitable bilinear map associated with $R$.
\begin{theorem}\label{theoremST23}
	Let $R$ be a radical ring. Then the corresponding two-sided brace is a bi-skew brace if and only if $R^3=0$.
\end{theorem}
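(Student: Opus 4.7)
The plan is to reduce the statement to a regular-subgroup normalisation condition and then carry out one explicit ring computation. By Theorem~\ref{Rump}, the two-sided brace attached to the radical ring $R$ is $(R,+,\circ)$ with $x\circ y=x+y+xy$ and abelian additive group. Theorem~\ref{Tcaranti} then says that this brace is bi-skew if and only if the regular subgroup $T_\circ(R)=\{\tau_y:y\in R\}<\mathrm{Hol}(R,+)$, with $\tau_y(x)=x\circ y$, is normalised by the additive translation group $T_+(R)=\{\sigma_k:k\in R\}$, $\sigma_k(x)=x+k$. The whole problem thus collapses to the question: for which $R$ does every conjugate $\sigma_k\tau_y\sigma_k^{-1}$ belong to $T_\circ(R)$?

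First I would expand $\sigma_k\tau_y\sigma_k^{-1}(x)=(x-k)\circ y+k$ using the ring formula for $\circ$ together with the abelianness of $+$, which turns the conjugate into a simple polynomial expression in $x,y,k$. Second I would ask for which $y'\in R$ (possibly depending on $k$ and $y$, but not on $x$) this expression equals $\tau_{y'}(x)=x+y'+xy'$ for every $x\in R$. Evaluating at $x=0$ pins down the candidate $y'$, and substituting that value back into the equation reduces the identity to a single trilinear condition on $x,k,y$, with the $x$-dependence collapsing cleanly.

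The only obstacle is executing this calculation carefully in the possibly non-commutative setting and keeping track of the left/right placement of $k$ and $y$; doing so faithfully, the remaining identity will reduce to $xky=0$ for all $x,k,y\in R$, which is exactly $R^3=0$. Combined with Theorem~\ref{Tcaranti}, this yields the stated equivalence. The same pattern of argument will serve as the template for the bilinear-map version developed in Sect.~\ref{sec:algebras}, where the ring product is replaced by a general bilinear map, so it is worth framing the computation in a way that isolates the trilinear identity as the genuine obstruction.
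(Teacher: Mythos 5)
Your argument is correct, but it follows a genuinely different route from the paper's. The paper states Theorem~\ref{theoremST23} as a convenient reformulation of Stefanello--Trappeniers and does not prove it at that point; the proof it does supply (in Sect.~\ref{sec:algebras}, for the finite-dimensional $F$-algebra version) works entirely inside the brace: the forward direction verifies the skew-brace identity $x+(y\circ z)=(x+y)\circ x^{-1}\circ(x+z)$ by decomposing $R=V\oplus\Ann(R)$ and writing $\circ$ through the associated bilinear map, and the converse expands that same identity in the ring and cancels terms until only $y\cdot x^{-1}\cdot z=0$ survives. You instead invoke Caranti's criterion (Theorem~\ref{Tcaranti}): the brace attached to the radical ring by Theorem~\ref{Rump} is bi-skew precisely when $T_\circ(R)=\{\tau_y : x\mapsto x\circ y\}<\mathrm{Hol}(R,+)$ is normalised by $T_+(R)$, and the conjugation computation does collapse as you predict: $\sigma_k\tau_y\sigma_k^{-1}$ sends $x$ to $x+y+xy-ky$, evaluation at $x=0$ forces $y'=y-ky$, and the residual identity is $xky=0$ for all $x,k,y$, which is exactly $R^3=0$ (and conversely $R^3=0$ makes the conjugate equal to $\tau_{y-ky}$). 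What your route buys is generality and economy: it needs no field, no finite-dimensionality, and no choice of a complement of the annihilator, so it proves the radical-ring statement itself rather than the algebra specialisation; it is also the group-theoretic counterpart of the \ear\ viewpoint the paper develops in Remark~\ref{rmk:equiv}. What the paper's route buys is the explicit bilinear-map normal form for $\circ$, which is then reused for the matrix-space and automorphism results of Sect.~\ref{sec:algebras}, whereas your calculation, as you note, would have to be recast in that language to serve the same purpose.
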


We are now ready to illustrate the connection between a specific instance of bi-skew braces, nilpotent algebras of class two with additional specified conditions, and subgroups of the affine group with the dual normalisation property. To accomplish this, we introduce three families of binary structures intended for cryptographic applications, with the field specified as $\mathbb{F}_2$.

\begin{definition}\label{def:families}
	Let $R$ be a finite dimensional vector space over the field with two elements $\mathbb{F}_2$. 
	Let us define
	\begin{enumerate}
		\item the family \bbb\ of \emph{binary bi-braces}, where a binary bi-brace  is a triple $(R,+,\circ)$ such that
		\begin{enumerate}[(a)]
			\item  $(R,\circ)$ is a group,
			\item\label{eq:comm-circ}  $x\circ x=0$,
			\item  $(x+y)\circ z=x\circ z+z+y\circ z$,
			\item  $x\circ y+z=(x+z)\circ z\circ(y+z)$,
			\item  $\delta(x\circ y)=x\circ (\delta y)+(\delta+1)x$,
		\end{enumerate}
		for every $x,y,z\in R$, $\delta\in\mathbb{F}_2$;
		\medskip

		\item  the family \baa\ of \emph{binary alternating algebras}, where a binary alternating algebra is a triple $(R,+,\cdot)$ such that 
		\begin{enumerate}[(a)]
			\item $(R,+,\cdot)$ is an $\mathbb{F}_2$-algebra,
			\item $(x\cdot y)\cdot z=0=x\cdot(y\cdot z)$ (\emph{nilpotency of class two}),
			\item \label {eq:alter}$x\cdot x=0$  (\emph{alternating}),
		\end{enumerate}
		for every $x,y,z\in R$;
		\medskip
		
		\item  the family \ear\ of \emph{elementary abelian regular subgroups}  $T_{\circ}(R)=\{\tau_y : y\in R\}$ of the affine group $\agl(R)$ which are normalised by the translation group $T_+(R)$, i.e.\ $T_+(R)<N_{\sym(R)}(T_\circ(R))$. 
	\end{enumerate}
\end{definition}
Notice that in Definition~\ref{def:families}, (\ref{eq:comm-circ}) implies that $\circ$ is also commutative, indeed
	\[0=(x+y)\circ(x+y)=x\circ x+y\circ x+x+x\circ y+y\circ y+y+x+y=x\circ y+y\circ x.\]
	Similarly, also $\cdot$ is commutative. Indeed
	\[0=(x+y)\cdot(x+y)=x\cdot x+x\cdot y+y\cdot x+y\cdot y=x\cdot y+y\cdot x.\]

\begin{remark}
		If $(R,+,\circ)$ is a binary bi-brace, then $(R,\circ,+)$ is an $\mathbb{F}_2$-brace as in Definition~\ref{def:fbra}. Indeed, by Definition~\ref{def:families}~\eqref{eq:comm-circ}, $(R,\circ)$ is a vector space over $\mathbb{F}_2$. Thus, our task is to confirm that Eq.~\eqref{fbrace} is valid for $(R,\circ,+)$, namely
		\[\delta(x+y)=(x+\delta y)\circ ((\delta+1)x)\]
		for every $x,y\in R$, $\delta\in\mathbb{F}_2$. Observing that $x\circ y=x\cdot y+x+y$, $x\cdot x=0$ and $\delta(\delta+1)=0$, we have
		\begin{align*}
			(x+\delta y)\circ ((\delta+1)x)=\,&x+\delta y+(\delta+1)x+(x+\delta y)\cdot((\delta+1)x)\\
			=\,&x+\delta y+\delta x+x+(\delta+1)(x\cdot x)+\delta(\delta+1)(y\cdot x)\\
			=\,&\delta(x+y).
		\end{align*}
	\end{remark}

\subsection{"New" equivalences}
The families of Definition~\ref{def:families} are \emph{elementwise} equivalent, meaning that when one of these structures is given, the other two can be bijectively derived using the equalities
\[
x\circ y=x+y+x\cdot y=x\tau_y,
\]
where $x,y\in R$. 
Moreover, this correspondence preserves the isomorphism classes of \bbb, the isomorphism classes of \baa, and the conjugacy classes of \ear\ under the action of $\GL(R)$, i.e.\ for every $\beta\in\gl(R)$, the following holds:
\[
(x\beta\circ y\beta)\beta^{-1}=x+y+(x\beta\cdot y\beta)\beta^{-1}=x\beta\tau_{y\beta}\beta^{-1}.
\]
In particular, this shows that the automorphism groups of these structures coincides, meaning:
\begin{equation}\label{eq:auts}
	\aut(R,+,\circ)=\aut(R,+,\cdot)=\gl(R)\cap\aut(R,\circ).
\end{equation}

The mentioned equivalences can be obtained from the results referenced in the following remark.

\begin{remark}\label{rmk:equiv}
The information presented in Theorem~\ref{Tcaranti}, tailored for a finite-dimensional vector space $(R,+)$ over a field $F$, establishes the equivalence of the following data:
	\begin{enumerate}
		\item  a regular subgroup $T_{\circ}(R)=\{\tau_y:\, y\in R\}$ of $\agl(R)$ which is normalised by $T_+(R)$, i.e.\ $T_+(R)<N_{\sym(R)}(T_{\circ}(R))$;
		\item a two-sided bi-skew brace $(R,+,\circ)$ over $F$.
	\end{enumerate}
	In particular, in the case of Definition~\ref{def:families}, this establishes the equivalence between the families {\ear} and {\bbb}. Additionally, from Theorem~\ref{Tchilds}, if $F=\mathbb{F}_p$ the following data are equivalent:
	\begin{enumerate}
		\item  a finite dimensional $\mathbb{F}_p$-algebra of nilpotency class two $(R,+,\circ)$;
		\item a two-sided bi-skew brace $(R,+,\circ)$ over $\mathbb{F}_p$.
	\end{enumerate}
	This provides the equivalence between the families {\baa} and {\bbb}. Finally, if $T_{\circ}(R)$ is abelian then the following data are equivalent~\cite[Lemma 3]{Ca05}, \cite{CARANTI2020647}:
	\begin{enumerate}
		\item  an abelian regular subgroup $T_{\circ}(R)=\{\tau_y:\, y\in R\}$ of $\agl(R)$ which is normalised by $T_+(R)$, i.e.\ $T_+(R)<N_{\sym(R)}(T_{\circ}(R))$;
		\item  a commutative $F$-algebra $(R,+,\cdot)$ of nilpotency class two.
	\end{enumerate}
	The last result proves the equivalence between the families {\ear} and {\baa}.
\end{remark}

\subsection{A convenient representation for \bbb}
In this section, we revisit a convenient method for constructing and representing binary bi-braces and we explore how the cryptographic requirements outlined in Sect.~\ref{sec:crittoreq} are incorporated into the axioms of the algebraic families of Definition~\ref{def:families}.\\

Let us start by recalling that, 
for every $y\in R=\mathbb{F}_2^n$, the map
\[\lambda_y: R\longrightarrow R,\quad x\mapsto x\lambda_y=x\circ y+y\]
is an automorphism of the additive group $(R,+)$~\cite[Lemma 2.6]{Ce18}.
In particular, any binary bi-brace is uniquely determined by the so-called \emph{lambda} map
\[\lambda:(R,\circ)\longrightarrow\gl(R,+),\quad x\mapsto \lambda_x\]
which is a group homomorphism. This implies that, once a basis $e_1,\dots,e_n$ of $R$ is fixed, a binary bi-brace is uniquely determined by the matrices $\lambda_{e_1},\dots,\lambda_{e_n}$.
We also recall that, for every $y\in R$, the set $T_\circ(R)$ consisting of the affine maps
\[\tau_y:\, R\longrightarrow R,\quad x\mapsto x\lambda_y+y\]
belongs to \ear, and vice versa, given an element of \ear, one can define the lambda map of a binary bi-brace.
Under the hypothesis that the kernel of the lambda map
\[\ker(\lambda)=\{y\in R \mid \lambda_y=1\}\] 
is spanned by the last $d$ vectors of the canonical basis $\{e_1,\dots,e_m,e_m+1,\dots,e_{m+d}\}$, the matrices $\lambda_{e_i}$, for $1\leq i\leq m$, can be represented in the following block form
\[\lambda_{e_i}=\pmat{1_m&\Theta_i\\0&1_d}\]\label{lambda}
(see~\cite[Theorem 3.11]{calderini2021properties}). The horizontal concatenation matrix of the $m\times d$ matrices $\Theta_i$
\[\Theta=\pmat{\Theta_1&\dots&\Theta_m}\]
is the \emph{defining matrix} of an operation $\circ$, as introduced by Civino et al.~\cite[Definition 3.5]{CBS19}. The following result, which is revisited later from the perspective of binary alternating algebras, provides a compact way of representing every structure in \bbb\ (and consequently in \ear\ and \baa).

\begin{theorem}{\cite{CBS19}}\label{theta}
	Let $\Theta_{i,j}$ be $m\times d$ matrices over $\mathbb{F}_2$ and $\Theta_{i,j}$ the $d$-dimensional row vector of the matrix $\Theta_i$ for every $1 \leq i,j\leq m$. Then $\Theta=\pmat{\Theta_1&\dots&\Theta_m}$ is the defining matrix of a binary bi-brace such that $\ker(\lambda)=\langle e_{m+1},\dots,e_{m+d}\rangle$ if and only if, for each $1 \leq i,j\leq m$ the following conditions holds:
	\begin{enumerate}
		\item  $\Theta_{i,i}=0$;
		\item  $\Theta_{i,j}=\Theta_{j,i}$;
		\item  $\Theta_1,\dots,\Theta_m$ are linearly independent. 
	\end{enumerate}
	\end{theorem}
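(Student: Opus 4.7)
The plan is to translate each condition on $\Theta$ into an intrinsic statement about $T_\circ(R) = \{\tau_y : y \in R\} \subset \agl(R)$, where $\tau_y\colon x \mapsto x\lambda_y + y$, and then invoke the equivalence $\bbb \leftrightarrow \ear$ of Remark~\ref{rmk:equiv}. Throughout I use the block form $\lambda_{e_i} = \pmat{1_m & \Theta_i \\ 0 & 1_d}$ for $i\le m$ and $\lambda_{e_i} = 1$ for $i>m$, together with the resulting identity $e_j \lambda_{e_i} = e_j + \Theta_{i,j}$ valid for $j\le m$.

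\emph{Forward direction.} Suppose $(R,+,\circ)$ is a binary bi-brace with $\ker(\lambda) = \langle e_{m+1},\ldots,e_{m+d}\rangle$. Evaluating axiom~(b) of Definition~\ref{def:families} at $x=e_i$ yields $0 = e_i\circ e_i = e_i\lambda_{e_i} + e_i = \Theta_{i,i}$, which is~(1); evaluating the commutativity of $\circ$ (established in the remark after Definition~\ref{def:families}) at the pair $(e_i,e_j)$ yields $\Theta_{j,i} = \Theta_{i,j}$, which is~(2). For a general $y = \sum a_i e_i$, the matrix $\lambda_y$ takes the block shape with upper-right corner $\sum_{i\le m} a_i \Theta_i$; hence $\ker(\lambda)$ coincides with $\langle e_{m+1},\ldots,e_{m+d}\rangle$ precisely when $\Theta_1,\ldots,\Theta_m$ are $\mathbb{F}_2$-linearly independent, which is~(3).

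\emph{Backward direction.} Conversely, assume (1)-(3) and define the $\lambda_{e_i}$ as above. These matrices pairwise commute (their products simply add the $\Theta$-blocks) and square to the identity in characteristic two, so the rule $\lambda_{\sum a_i e_i} := \prod \lambda_{e_i}^{a_i}$ extends unambiguously to a homomorphism $\lambda\colon(R,+)\to\gl(R)$. A direct computation gives $y\lambda_y = y + \sum_{i,j}b_i b_j\Theta_{i,j}$, whose diagonal part vanishes by~(1) and whose off-diagonal part pairs up and cancels in characteristic two by~(2); hence every $\tau_y$ is an involution. Since $y\lambda_z + y$ lies in the span of the last $d$ basis vectors (and therefore in $\ker(\lambda)$), we obtain $\lambda_{y\lambda_z + z} = \lambda_{y+z} = \lambda_y\lambda_z$, proving closure of $T_\circ(R)$ under composition. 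The equality $y\lambda_z + z = z\lambda_y + y$ (commutativity of $T_\circ(R)$) reduces, after relabeling the summation indices, to~(2); normalisation by $T_+(R)$ follows from $\sigma_u \tau_y \sigma_u^{-1}(x) = x\lambda_y + (u\lambda_y + u + y)$, whose translation parameter lies in the same $\lambda$-fibre as $y$; and (3) identifies $\ker(\lambda)$ with $\langle e_{m+1},\ldots,e_{m+d}\rangle$. Thus $T_\circ(R) \in \ear$, and Remark~\ref{rmk:equiv} delivers a binary bi-brace whose defining matrix is $\Theta$.

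\emph{Main obstacle.} All arguments are linear-algebraic in block form; the only real subtlety is to recognise that the ``kernel corrections'' produced in expressions such as $y\lambda_z$ stay confined to $\ker(\lambda)$ and therefore do not propagate under further multiplication by the $\lambda_{e_i}$. Combined with the alternating-plus-symmetric combinatorics of (1)-(2), this makes every unwanted bilinear cross-term collapse in characteristic two, and this is what allows the reduction to a single check on the basis to characterise the whole bi-brace structure.
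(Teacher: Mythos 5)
Your argument is correct, but it travels a different side of the equivalence triangle than the paper does. The paper's own proof (given in Sect.~\ref{sec:binalg}) converts the three conditions into statements about the $m\times m$ matrices $B_1,\dots,B_d$ determined by $\Theta_i=\pmat{B_1e_i^t&\dots&B_de_i^t}$: conditions (1)--(2) say precisely that each $B_k$ is symmetric with zero diagonal, and (3) says that $\pmat{B_1&\dots&B_d}$ has rank $m$, i.e.\ that the associated bilinear map is alternating and nondegenerate; Theorem~\ref{th1} and the \baa$\,\leftrightarrow\,$\bbb\ equivalence then finish the proof in two lines. You instead verify directly, by affine computations in block form, that the maps $\tau_y\colon x\mapsto x\lambda_y+y$ form an elementary abelian regular subgroup of $\agl(R)$ normalised by $T_+(R)$, and then pass through the \ear$\,\leftrightarrow\,$\bbb\ equivalence of Remark~\ref{rmk:equiv}. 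Your route is more elementary and closer in spirit to the original group-theoretic argument of~\cite{CBS19}, at the cost of checking more axioms by hand (involutivity, closure, commutativity, normalisation --- and you should say explicitly that transitivity on $0$ together with $\Size{T_\circ(R)}=\Size{R}$ gives regularity, which combined with the involutions gives the ``elementary abelian'' part of \ear); the paper's route is shorter because the three conditions become, verbatim, ``alternating and nondegenerate'', and it simultaneously sets up the identification of $\Theta$ with the defining matrices that is reused later in that section. One step you assert but should justify, in the forward direction, is that $\lambda_y$ for a general $y=\sum a_ie_i$ has upper-right block $\sum_{i\le m}a_i\Theta_i$: a priori $\lambda$ is only a homomorphism on $(R,\circ)$, but since $x+y$ and $x\circ y$ differ by an element of $R^2\le\soc(R)=\ker(\lambda)$ one gets $\lambda_{x+y}=\lambda_x\lambda_y$, which is the one-line remark needed (it is the same ``kernel corrections are invisible to $\lambda$'' observation you already use in the backward direction).
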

	In accordance with the equivalence of the families of Definition~\ref{def:families} as discussed in Remark~\ref{rmk:equiv}, it is important to notice that 
	 $\ker(\lambda)=\{y\in R \mid \lambda_y=1_n\}$ coincides with the set \[\{y\in R \mid \tau_y\in T_+(R)\cap T_{\circ}(R)\}\]
	as well as with the following two ideals: the \emph{socle} of the binary bi-brace $(R,+,\circ)$,
	\[\soc(R)=\{y\in R \mid \forall\, x\in R\quad x\circ y=x+y\}\]
	 and the \emph{annihilator} of the binary alternating algebra $(R,+,\cdot)$,
	 \[\Ann(R)=\{y\in R \mid \forall\, x\in R\quad x\cdot y=0\}.\]
	 
Considering the cryptographic context that prompted this discussion, it is important to recognise that "keys" $k$ from
	 $\soc(R)$ satisfy $x+k=x\circ k$ for each $x \in R$. This implies that, if $k \in \soc(R)$ the probability of Eq.~\eqref{eq:keydiffcirc}, the depicting a $\circ$-difference spreading through the key-addition layer, is one. Consequently, such keys have been referred to as \emph{weak keys} within cryptographic discussions~\cite{CBS19}.
	 Now that the context is made more clear, it is not hard to notice that, in the case of Definition~\ref{def:families}, we can write the output difference to the key-addition layer when considering an input difference equal to $\Delta$ as 
	 
\begin{eqnarray}\nonumber
(x +k) \circ \left((x \circ \Delta)+k\right)&=&(x+k)\p(x+\D + x \cdot \D +k )\\\nonumber
&=& x+k+x+\D+x \cdot \D + k  \\\nonumber
&& + (x+k) \cdot (x+\D+x \cdot \D + k)\\\label{eq:triple}
&=& \D + x \cdot \D + x\cdot x + x \cdot \D + x \cdot x \cdot \D\\\nonumber
&& + x \cdot k + k \cdot x  + k \cdot \D + k \cdot \D \cdot x + k \cdot k\\ \label{eq:nox}
&=& \Delta + k \cdot \D,
\end{eqnarray}
for each $k$, weak or not.
The previous argument highlights the necessity of the dual normalisation properties of groups from \ear, which, due to the equivalence demonstrated in Remark~\ref{rmk:equiv}, translates into a class-two nilpotency requirement on the algebra $(R,+, \cdot)$, ensuring all triple products in Eq.~\eqref{eq:triple} vanish. Notice that obtaining Eq.~\eqref{eq:nox} from Eq.~\eqref{eq:triple} also requires that $(R,+,\cdot)$ is alternating (see~Definition~\ref{def:families}(\ref{eq:alter})), thus implying commutativity as well. 
Moreover, it is important to observe that the nontrivial ideal $R^2=\langle x\cdot y:\, x,y\in R\rangle$ is contained in $\Ann(R)$ due to the nullity of each triple product. This yields a dual insight. Firstly, the predictability of the value of Eq.~\eqref{eq:nox} depends on the magnitude of $R^2$, namely its dimension as a subspace of $(R,+)$. Secondly, since $R^2 \leq \Ann(R)=\soc(R)$, it provides clarity on the location of the \emph{error} $k \cdot \D$.\\

In the upcoming sections, we investigate binary bi-braces from the perspective of class-two nilpotent algebras. We characterise isomorphic algebras within this context, using bilinear maps and congruences of matrix spaces. Through the characterisation of isomorphisms among algebras in the family \baa, we derive the structure of the automorphism group of an algebra $(R,+,\cdot) \in$ \baa. It is worth noting (refer to Eq. \eqref{eq:auts}) that $\text{Aut}(R,+,\cdot) = \text{GL}(R) \cap \text{Aut}(R,\circ)$ represents the group of linear maps respecting both $+$ and $\circ$ operations, enabling deterministic propagation of differentials regardless of the operation under consideration. We further specialise this result to the scenario where $R^2$ has minimal dimension, i.e.\ $\dim(R^2)=1$, particularly in its application to alternative differential attacks.


\section{Nilpotent algebras of class two}\label{sec:algebras}

This section presents a description of the automorphisms and isomorphisms of nilpotent algebras of class two, framed in terms of equivalence between appropriate bilinear maps and congruences among matrix spaces. It is important to highlight that the forthcoming results extend beyond the realm of algebras in \baa, implying that the algebras under consideration need not be alternating or commutative. Subsequently, in Sect.~\ref{sec:binalg}, these findings are adapted to the context of binary alternating algebras.\\

We begin by briefly revisiting the fundamentals of nilpotent algebras of class two and nondegenerate bilinear maps. This is complemented by two examples that offer an initial insight into the relationship between these two concepts and the subsequent result of generalisation.

Let $(R,+,\cdot)$ be an $F$-algebra of nilpotency class two, i.e.\ $R^3=0$ and $R^2\neq0$. 
The nontrivial ideal $R^2=\langle x\cdot y:\, x,y\in R\rangle$ is contained in the ideal 
\[
\Ann(R)=\{x\in R\ |\ \forall\ y\in R\quad x\cdot y=0=y\cdot x\}
\] because each triple product is null, and in particular $\Ann(R)\neq 0$.

\begin{example}\label{ex:ann}
	Let us consider the vector space $R=\mathbb{F}_2^3$ endowed with the following product defined over the canonical basis $\{e_1,e_2,e_3\}$
	\[\begin{array}{c|ccc}
		\cdot&e_1&e_2&e_3\\
		\hline
		e_1	 &0&e_3&0\\
		e_2	 &e_3&0&0\\
		e_3	 &0&0&0
	\end{array}\quad .\]
	Then $(R,+,\cdot)$ is a nilpotent algebra of class two with $\Ann(R)=R^2=\langle e_3\rangle$.
\end{example}

\begin{definition}
	Given two vector spaces $V$ and $W$ over a field $F$, a map \[\phi: V\times V\longrightarrow W\] is \emph{bilinear} if, for every $x_1,x_2,x_3\in V$, $\delta\in F$,
	\begin{itemize}
		\item $\phi(x_1+x_2,x_3)=\phi(x_1,x_3)+\phi(x_2,x_3)$,
		\item $\phi(x_1,x_2+x_3)=\phi(x_1,x_2)+\phi(x_2,x_3)$,
		\item $\delta\phi(x_1,x_2)=\phi(\delta x_1,x_2)=\phi(x_1,\delta x_2)$,
	\end{itemize}
	and \emph{nondegenerate} if 
	\begin{itemize}
		\item $\forall x_2\in V\quad \phi(x_1,x_2)=0=\phi(x_2,x_1)\quad \implies\quad x_1=0$.
	\end{itemize}
\end{definition}

\begin{example}\label{esempio2}
	Let us consider the vector space $R=\mathbb{F}_2^3$ and the following matrix over $\mathbb{F}_2$ \[B=\pmat{0&1\\1&0}.\]
	The map $\phi:\, \langle e_1,e_2\rangle\times\langle e_1,e_2\rangle\longrightarrow \langle e_3\rangle$ defined by
	\[\phi\big((x_1,x_2,0),(y_1,y_2,0)\big)=\big(0,0,(x_1,x_2)\,B\,(y_1,y_2)^t\big)\]
	is a nondegenerate bilinear map.
	Now, let us consider the matrix 
	\[\widehat{B}=\pmat{0&1&0\\1&0&0\\0&0&0}\]
	and the map $\widehat{\phi}:R\times R\longrightarrow\langle e_3\rangle$ defined by
	\[\widehat{\phi}\big((x_1,x_2,x_3),(y_1,y_2,x_3)\big)=\big(0,0,(x_1,x_2,x_3)\,\widehat{B}\,(y_1,y_2,y_3)^t\big).\]
	The evaluation of $\widehat{\phi}$ over the canonical basis $\{e_1,e_2,e_3\}$
	\[\begin{array}{c|ccc}
		\phi&e_1&e_2&e_3\\
		\hline
		e_1	 &0&e_3&0\\
		e_2	 &e_3&0&0\\
		e_3	 &0&0&0
	\end{array}\]
	gives the algebra product of Example~\ref{ex:ann}. 
	Observe that $\phi$ is the restriction of the map $\widehat{\phi}$, defined over the entire space $R$, to a complement of $\Ann(R)=\langle e_3\rangle$.
\end{example}

Building upon the previous two examples, in a more general context, it becomes evident that: the product of a nilpotent algebra of class two induces a nondegenerate bilinear map, defined on a complement of the annihilator and taking values in the annihilator itself. Conversely, given a nondegenerate bilinear map $\phi:V\times V\longrightarrow W$ over $F$-vector spaces $V$ and $W$, it is possible to define a product on the direct sum $V\oplus W$ that equips it with the structure of a nilpotent algebra of class two.
The following result shows this connection between nilpotent algebras of class two and nondegenerate bilinear maps.

\begin{theorem}\label{th1}
	Let $(R,+,\cdot)$ be a nilpotent algebra of class two over a field $F$, let $W=\Ann(R)$ and $V$ be a complement of $W$. Then 
	\[\widehat{\phi}:R\times R\longrightarrow W,\quad \widehat{\phi}(x,y)=x\cdot y\]
	is a bilinear map and its restriction to $V$, $\phi:\,V\times V\longrightarrow W$, is a nondegenerate bilinear map, i.e.\ given $x\in V$, if ${\phi}(x,y)=0={\phi}(y,x)$ for each $y\in V$, then $x=0$.
	
	Conversely, given two $F$-vector spaces $V$ and $W$ and a nondegenerate bilinear map $\phi: V\times V\longrightarrow W$, the triple $(V\oplus W,+,\cdot)$ is an associative $F$-algebra of nilpotency class two with annihilator $0\oplus W$ where
	\begin{align}\label{product}
		(x_1,y_1)\cdot(x_2,y_2)=(0,\phi(x_1,x_2))
	\end{align}
	for every $x_1,x_2\in V$, $y_1,y_2\in W$.
	\begin{proof}
		The bilinearity of $\widehat{\phi}:R\times R\longrightarrow W$ follows from the left/right-distributivity and compatibility with scalars properties of the algebra $R$. Moreover, an element $x\in V$ such that $x\cdot y=0=y\cdot x$ for each $y\in V$ belongs to $\Ann(R)\cap V=0$.

Let $\phi: V\times V\longrightarrow W$ be a nondegenerate bilinear map. The product defined by Eq.~\eqref{product} satisfies the right-distributivity (and similarly the left-distributivity)
		\begin{align*}
			(x_1+x_2,y_1+y_2)\cdot(x_3,y_3)=\,&(0,\phi(x_1+x_2,x_3))\\
			=\,&(0,\phi(x_1,x_3))+(0,\phi(x_2,x_3))\\
			=\,&(x_1,y_1)\cdot(x_3,y_3)+(x_2,y_2)\cdot(x_3,y_3),
		\end{align*}
		and the compatibility with scalars
		\begin{align*}
			\delta((x_1,y_1)\cdot (x_2,y_2))=(0,\delta\phi(x_1,x_2))=\,&(0,\phi(\delta x_1,x_2))=(\delta x_1,\delta y_1)\cdot (x_2,y_2)\\
			=\,&(0,\phi(x_1,\delta x_2))=(x_1,y_1)\cdot(\delta x_2,\delta y_2).
		\end{align*}
		Moreover, associativity and nilpotency class two follow at once from
		\[(x_1,y_1) \cdot \big((x_2,y_2) \cdot(x_3,y_3) \big)\ =\ (x_1,y_1) \cdot (0,\phi(x_2,x_3))\ =\ (0,\phi(x_1,0))\ =\ 0\]
		and from 
		\[\big((x_1,y_1)\, \cdot\, (x_2,y_2)\big) \cdot(x_3,y_3)\ =\ 0.\]
		By definition, $\Ann(V\oplus W,+,\cdot)$ is the set
		\[\left\{(x_1,y_1)\in V\oplus W \mid \forall\ (x_2,y_2)\in V\oplus W\quad (0,\phi(x_1,x_2))=(0,0)=(0,\phi(x_2,x_1))\right\}\]
		which coincides with $\left\{(0,y_1):\, y_1\in W\right\}=0\oplus W$ by the nondegeneracy of $\phi$.
	\end{proof}
\end{theorem}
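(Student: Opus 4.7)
The plan is to verify both directions by tracking how the nilpotency condition $R^3 = 0$ interacts with the annihilator $W = \Ann(R)$.

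For the forward direction, I would first observe that $\widehat{\phi}$ is well-defined into $W$: since $R^3 = 0$, for every $z \in R$ we have $(x\cdot y)\cdot z = 0 = z\cdot(x\cdot y)$, so $x\cdot y \in \Ann(R) = W$. Bilinearity of $\widehat{\phi}$ then reduces to the distributivity axioms and compatibility with scalars of the algebra $(R,+,\cdot)$. The restriction $\phi$ to $V \times V$ inherits bilinearity trivially. The only substantive point is nondegeneracy: if $x \in V$ satisfies $\phi(x,y) = \phi(y,x) = 0$ for every $y \in V$, I would upgrade this to $x \cdot r = r \cdot x = 0$ for every $r \in R$ by writing $r = v + w$ with $v \in V$, $w \in W = \Ann(R)$ and using distributivity together with $x \cdot w = w \cdot x = 0$. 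This places $x$ in $\Ann(R) \cap V = W \cap V = 0$, yielding $x = 0$.

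For the converse, I would define multiplication on $V \oplus W$ via Eq.~\eqref{product} and verify the algebra axioms directly. Right- and left-distributivity and compatibility with scalars follow from bilinearity of $\phi$ applied to the first coordinates (the second coordinates are irrelevant to the product). Associativity and nilpotency of class two follow from the same observation: any product lies in $0 \oplus W$, so any further product of it with another element has first coordinate $0$, forcing $((x_1,y_1)\cdot(x_2,y_2))\cdot(x_3,y_3) = 0 = (x_1,y_1)\cdot((x_2,y_2)\cdot(x_3,y_3))$. Nondegeneracy of $\phi$ ensures that $R^2 \neq 0$ (assuming $V \neq 0$), so the nilpotency class is exactly two.

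Finally, for the annihilator, elements $(0,y_1)$ annihilate everything because the product in Eq.~\eqref{product} depends only on the first coordinates. Conversely, if $(x_1,y_1)$ annihilates every $(x_2,y_2)$, then $\phi(x_1,x_2) = 0 = \phi(x_2,x_1)$ for every $x_2 \in V$, whence $x_1 = 0$ by nondegeneracy; this shows $\Ann(V\oplus W) = 0 \oplus W$.

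I expect no serious obstacle: both directions are essentially bookkeeping once the correct target of $\widehat{\phi}$ is identified. The only slightly delicate point is the nondegeneracy step in the forward direction, where one must carefully use the decomposition $R = V \oplus W$ and the fact that $W$ annihilates everything to reduce the condition on $V$ to a condition on all of $R$.
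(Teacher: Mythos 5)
Your proposal is correct and follows essentially the same route as the paper's proof: bilinearity from the algebra axioms, nondegeneracy via $\Ann(R)\cap V=0$, and in the converse a direct verification of the axioms with the annihilator computed from nondegeneracy of $\phi$. The only differences are that you spell out a few points the paper leaves implicit (that $x\cdot y\in W$ so $\widehat{\phi}$ lands in $W$, the upgrade from annihilating $V$ to annihilating all of $R$, and that $R^2\neq 0$), which is fine but not a different argument.
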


Considering the theorem just established, it is logical to propose the following definition.

\begin{definition}
	Given a nilpotent $F$-algebra of class two $(R,+,\cdot)$, $W=\Ann(R)$ and $V$ a complement of $W$, we define the restriction $\phi:V\times V\longrightarrow W$ of the map
	\[\widehat{\phi}: R\times R\longrightarrow W,\quad \phi(x,y)=x\cdot y\]
	as the \emph{bilinear map associated with $R$ relative to $(V,W)$}.
	If $R$ is finite-dimensional, given a basis $v_1,\dots,v_m$ of $V$ and a basis $w_1,\dots,w_d$ of $W$, we define $\phi$ as the \emph{bilinear map associated with $R$ relative to the ordered basis $(v_1,\dots,v_m,w_1,\dots,w_d)$ of $R$}. If bases for $V$ and $W$ are not specified, the canonical bases $F^m=\langle e_1,\dots,e_m\rangle$ and $F^d=\langle e_1,\dots,e_d\rangle$ are assumed.
\end{definition}

Using the concepts introduced above, we prove the following version of Theorem~\ref{theoremST23}~(\cite{ST23a}) on the bijective correspondence between $F$-algebras of nilpotency class two $(R,+,\cdot)$ and two-sided bi-skew braces $(R,+,\circ)$ over $F$.
\begin{theorem}
	Let $F$ be a field and $(R,+,\cdot)$ be an $F$-algebra of nilpotency class two. Then $(R,+,\circ)$ is a two-sided bi-skew brace over $F$ where $x\circ y=x+y+x\cdot y$.
	
	Conversely, if $(R,+,\circ)$ is a two-sided bi-skew brace with underlying $F$-vector space $(R,+)$, then $(R,+,\cdot)$ is a finite dimensional $F$-algebra of nilpotency class two where $x\cdot y=x\circ y-x-y$.
\end{theorem}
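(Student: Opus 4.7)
The plan is to derive this statement by combining the classical brace/radical-ring correspondence of Theorem~\ref{Rump} with the bi-skew characterisation of Theorem~\ref{theoremST23}, and then to verify separately that the $F$-vector space structure is respected on both sides. The substantive algebraic content is already packaged in those two cited results; what needs fresh attention is only the compatibility with scalars.

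For the forward direction, I would first observe that an $F$-algebra of nilpotency class two is, in particular, a nilpotent (hence radical) ring. Theorem~\ref{Rump} then produces a two-sided brace $(R,+,\circ)$ with $x\circ y=x+y+x\cdot y$, and Theorem~\ref{theoremST23} lifts this to a two-sided bi-skew brace because $R^{3}=0$. It remains to verify the $F$-brace compatibility \eqref{fbrace}. Using the $F$-algebra identity $\delta(x\cdot y)=x\cdot(\delta y)$, a short computation yields
\[
\delta(x\circ y)=\delta x+\delta y+\delta(x\cdot y)=\delta x+\delta y+x\cdot(\delta y)=(\delta-1)x+(x\circ\delta y),
\]
and the symmetric right-hand compatibility follows analogously from $\delta(x\cdot y)=(\delta x)\cdot y$.

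For the converse, I would start with a two-sided bi-skew brace $(R,+,\circ)$ whose additive group carries an $F$-vector space structure compatible with \eqref{fbrace}, and define $x\cdot y=x\circ y-x-y$. Theorem~\ref{Rump} turns this into a radical ring, and by Theorem~\ref{theoremST23} the bi-skew hypothesis forces $R^{3}=0$. To upgrade this radical ring to an $F$-algebra, I would invoke \eqref{fbrace} to compute
\[
\delta(x\cdot y)=\delta(x\circ y)-\delta x-\delta y=(x\circ\delta y)+(\delta-1)x-\delta x-\delta y=x\cdot(\delta y),
\]
and the symmetric identity $\delta(x\cdot y)=(\delta x)\cdot y$ follows from the right-brace analogue guaranteed by the two-sided hypothesis.

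The main obstacle is not algebraic depth but bookkeeping: essentially all the substantive content has been delegated to Theorem~\ref{theoremST23}, so the proof reduces to transporting the scalar action across the construction. The one subtle point to watch is that, in the converse, obtaining both sides of the $F$-bilinearity of the algebra product genuinely requires two-sidedness, since the left brace condition alone only produces one of the two required scalar identities.
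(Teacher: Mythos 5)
Your route is genuinely different from the paper's, and it partly misses the point of the statement: in the paper this theorem \emph{is} the announced ``version of Theorem~\ref{theoremST23} tailored for a finite-dimensional algebra over a field'', so the paper proves the bi-skew/$R^3=0$ equivalence from scratch rather than quoting it. Concretely, for the forward direction the paper decomposes $R=V\oplus W$ with $W=\Ann(R)$, writes the circle operation through the associated bilinear map as $(x_1,x_2)\circ(y_1,y_2)=(x_1+y_1,\,x_2+y_2+\phi(x_1,y_1))$, and verifies the bi-skew identity \eqref{fdsb} by direct computation; for the converse it expands \eqref{fdsb} in terms of $\cdot$ and cancels until only $y\cdot x^{-1}\cdot z=0$ survives, giving $R^3=0$. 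Citing Theorem~\ref{theoremST23} is not circular, since it is an external result of Stefanello and Trappeniers stated without proof, but it outsources exactly the content this theorem is meant to demonstrate, leaving only the scalar bookkeeping (which is essentially the remark the paper already makes after Theorem~\ref{Rump}).

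The genuine gap is in your converse. The identity $\delta(x\cdot y)=x\cdot(\delta y)$ does follow from \eqref{fbrace} as you compute, but your claim that $(\delta x)\cdot y=\delta(x\cdot y)$ ``follows from the right-brace analogue guaranteed by the two-sided hypothesis'' is unfounded: via $x\cdot y=x\circ y-x-y$, the right-brace axiom $(a+b)\circ c=(a\circ c)-c+(b\circ c)$ translates only into right \emph{additivity} $(a+b)\cdot c=a\cdot c+b\cdot c$; it carries no scalar information, and Definition~\ref{def:fbra} imposes the scalar condition \eqref{fbrace} on one side only. Nor is the missing identity a formal consequence of the others: over $F=\mathbb{F}_4$ the product $(x_1,w_1)\cdot(x_2,w_2)=(0,\,x_1^2x_2)$ on $\mathbb{F}_4\oplus\mathbb{F}_4$ is bi-additive, associative with $R^3=0$, and satisfies $\delta(x\cdot y)=x\cdot(\delta y)$, so the corresponding $(R,+,\circ)$ is a two-sided bi-skew brace satisfying \eqref{fbrace}; yet $(\delta x)\cdot y=\delta^2(x\cdot y)\neq\delta(x\cdot y)$ for $\delta\notin\mathbb{F}_2$, so the product is not $F$-bilinear. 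Closing this requires extra input (commutativity of $\cdot$, a prime field so that the scalar action is iterated addition, or an explicit right-handed scalar axiom). To be fair, the paper's own converse also asserts the $F$-algebra structure without detail, leaning on the remark after Theorem~\ref{Rump}; but your proposal replaces that silence with an explicit justification that the stated hypotheses do not support.
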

\begin{proof}
	Since $(R,+,\cdot)$ is nilpotent, by Theorem \ref{Rump} $(R,+,\circ)$ is a two-sided $F$-brace. Let us consider three elements $x=(x_1,x_2),y=(y_1,y_2),z=(z_1,z_2)\in R=V\oplus W$, where $W=\Ann(R)$ and $V$ is a complement of $W$, and let $\phi:\,V\times V\longrightarrow W$ be the bilinear map associated with $R$. To prove that $(R,\circ,+)$ is a skew brace, we need to show the skew brace-like distributivity:
	\begin{align}\label{fdsb}
		x+(y\circ z)=(x+y)\circ x^{-1}\circ (x+z)\quad \text{for }\, x,y,z\in R.
	\end{align}
	Writing the circle operation as \[(x_1,x_2)\circ(y_1,y_2)=(x_1+y_1,x_2+y_2+\phi(x_1,y_1)),\]
	we obtain \[(x_1,x_2)^{-1}=(-x_1,-x_2+\phi(x_1,x_1)).\]
	Starting from the RHS (right-hand side) of Eq.~\eqref{fdsb} we have
	\begin{align*}
		\mathrm{RHS} \text { of Eq.~\eqref{fdsb}}=&(x_1+y_1,x_2+y_2)\circ (-x_1,-x_2+\phi(x_1,x_1))\circ(x_1+z_1,x_2+z_2)\\
		=&(y_1,y_2+\phi(x_1,x_1)+\phi(x_1+y_1,-x_1))\circ(x_1+z_1,x_2+z_2)\\
		=&(y_1,y_2+\phi(y_1,-x_1))\circ(x_1+z_1,x_2+z_2)\\
		=&(x_1+y_1+z_1,x_2+y_2+z_2+\phi(y_1,-x_1)+\phi(y_1,x_1+z_1))\\
		=&(x_1+y_1+z_1,x_2+y_2+z_2+\phi(y_1,z_1)),
	\end{align*}
	where the last expression coincides with the LHS of Eq.~\eqref{fdsb}. Therefore $(R,\circ,+)$ is a (two-sided) skew-brace over $F$.
	
	Conversely, if $(R,+,\circ)$ is a two-sided bi-skew brace over $F$, then $(R,+,\cdot)$ is an $F$-algebra. Given that $(R,\circ,+)$ is a skew brace, we have that Eq.~\eqref{fdsb} holds, therefore
	\[x+y\cdot z+y+z=((x+y)\cdot x^{-1}+x+y+x^{-1})\cdot(x+z)+(x+y)\cdot x^{-1}+x+y+x^{-1}+x+z\]
	for every $x,y,z\in R$. Notice that
	\[x\cdot x^{-1}=-x-x^{-1}=-x^{-1}-x=x^{-1}\cdot x.\]
	Consequently
	\begin{align*}
		0=\,&((x+y)\cdot x^{-1}+x+y+x^{-1})\cdot(x+z)+(x+y)\cdot x^{-1}-x\cdot x^{-1}-y\cdot z\\
		=\,&((x+y)\cdot x^{-1}+x+y+x^{-1})\cdot(x+z)+y\cdot x^{-1}-y\cdot z\\
		=\,&(x+y)\cdot x^{-1}\cdot(x+z)+(x+y+x^{-1})\cdot(x+z)+y\cdot x^{-1}-y\cdot z\\
		=\,&x\cdot x^{-1}\cdot x+x\cdot x^{-1}\cdot z+y\cdot x^{-1}\cdot x+y\cdot x^{-1}\cdot z+\\
		\qquad&+x^{-1}\cdot x+x^{-1}\cdot z+x\cdot x+x\cdot z+y\cdot x+\cancel{y\cdot z}+y\cdot x^{-1}-\cancel{y\cdot z}  \\
		=\,&-\cancel{x\cdot x}-\cancel{x^{-1}\cdot x}-\cancel{x\cdot z}-\cancel{x^{-1}\cdot z}-\cancel{y\cdot x^{-1}}-\cancel{y\cdot x}+y\cdot x^{-1}\cdot z+\\
		\qquad&+\cancel{x^{-1}\cdot x}+\cancel{x^{-1}\cdot z}+\cancel{x\cdot x}+\cancel{x\cdot z}+\cancel{y\cdot} x+\cancel{y\cdot x^{-1}}  \\
		=\,&y\cdot x^{-1}\cdot z.
	\end{align*}
	This proves that $R^3=0$, meaning that $(R,+,\cdot)$ is nilpotent of class two.
\end{proof}

Building upon Example~\ref{esempio2}, we proceed to introduce a series of matrices linked with a nondegenerate bilinear map. The vector space formed by these matrices holds significant importance in detailing the bijective morphisms of the algebras.

\subsection{Introducing matrix spaces}	
Let $(R,+,\cdot)$ be a nilpotent $F$-algebra of class two and let $\phi:\,V\times V\longrightarrow W$ be the bilinear map associated with $R$ relative to the ordered basis $(v_1,\dots,v_m,w_1,\dots,w_d)$. Then the bilinear map defined over the entire space $\widehat{\phi}:\, R\times R\longrightarrow W$ is determined by a sequence $(\widehat{B}_1,\dots,\widehat{B}_d)$ of $d$ square $(m+d)\times (m+d)$ matrices over $F$. More precisely, we can write
		\[\widehat{\phi}(x,y)=\phi_1(x,y)w_1+\dots+\phi_d(x,y)w_d\]
		where 
		\[\phi_k: R\times R\longrightarrow F,\quad \phi_k(x,y)=x\widehat{B}_ky^t\]
		is a bilinear form over $F$ for $1\leq k\leq d$. Conversely, if $\phi_1,\dots,\phi_d$ are given then the matrix $\widehat{B}_k$ is defined by
		\[\widehat{B}_k=\left[\begin{array}{c|c}\phi_k(v_i,v_j)&0\\\hline 0&0\end{array}\right],\quad 1 \leq i,j \leq m.\]
		We denote by $B_k$ the $m\times m$ submatrix of $\widehat{B}_k$ whose $(i,j)$ entry $B_k[i,j]$ is given by $v_i\widehat{B}_kv_j^t$ for each $1 \leq i,j \leq m$ and $1 \leq k \leq d$. Then the matrices $B_k$ are the $m\times m$ matrices associated to the bilinear map $\phi:V\times V\longrightarrow W$.
		
Conversely, starting from a sequence $B_1,\dots,B_d$ of square matrices of size $m$ over a field $F$, one can define a nilpotent algebra of class two by endowing the vector space $F^m\oplus F^d$ with the product (see~Theorem~\ref{th1}) \[(x_1,y_1)\cdot(x_2,y_2)=(0,(x_1B_1x_2^t,\dots,x_1B_dx_2^t)).\]
Ensuring the nondegeneracy of the bilinear map $\phi(x_1,x_2)=(x_1B_1x_2^t,\dots,x_1B_dx_2^t)$ requires only that the matrix formed by horizontally concatenating $B_1,...,B_d$ has a rank of $m$, as noted below.

\begin{remark}
A sequence $B_1,\dots,B_d$ of $m\times m$ matrices over a field $F$ defines a nondegenerate bilinear map $\phi:F^m\times F^m\longrightarrow F^d$ if and only if $\rank\pmat{B_1&\dots&B_d}=m$.

			In fact, we observe that a vector $x=x_1e_1+\dots+x_me_m\in F^m$ yields a null linear combination of the $m$ row vectors $(e_iB_1,\dots,e_iB_d)=\left((e_iB_1e_1^t,\dots,e_iB_1e_m^t),\dots,(e_iB_de_1^t,\dots,e_iB_de_m^t)\right)$, i.e.
			\[\underbrace{(0_m,\dots,0_m)}_{d-\text{components}}=\sum_{i=1}^m{x_i(e_iB_1,\dots,e_iB_d)}=\left((xB_1e_1^t,\dots,xB_1e_m^t),\dots,(xB_de_1^t,\dots,xB_de_m^t)\right),\]
			if and only if $(xB_he_1^t,\dots,xB_he_m^t)=(0,\dots,0)$ for each $1\leq h\leq d$, i.e.
			\[(0,\dots,0)=(xB_1e_j^t,\dots,xB_de_j^t)=\phi(x,e_j)\quad\forall\,1\leq j\leq m.\]
			Thus $\phi(x,e_j)=0$ for every $1\leq j\leq m$ if and only if $x=0$ is the unique vector which induces a null linear combination of the $m$ rows of $\pmat{B_1&\dots&B_d}$. Analogously, $\phi(e_j,x)=0$ for every $1\leq j\leq m$ if and only if $x=0$ is the unique vector which induce a null linear combination of the $m$ columns of $\pmat{B_1^t&\dots&B_d^t}^t$. Therefore, $\phi$ is nondegenerate if and only if $\pmat{B_1&\dots&B_d}$ has a rank of $m$.
		\end{remark}
		
\begin{definition}
Let $V=\langle v_1,\dots, v_m\rangle$ and $W=\langle w_1,\dots,w_d\rangle$ be two finite-dimensional vector spaces over a field $F$, and let $R=V\oplus W$. Given a nondegenerate bilinear map $\phi:V\times V\longrightarrow W$, we denote by $\cdot_\phi$ the algebra product induced by $\phi$ over $R$ as defined in Eq.~\eqref{product}. The matrices $B_1,\dots,B_d$ satisfying
\[\phi(x_1,x_2)=(x_1B_1x_2^t,\dots,x_1B_dx_2^t)\quad \text{for } x_1,x_2\in V\]
are referred to as the \emph{defining matrices of $(R,+,\cdot_\phi)$}.
\end{definition}

The subsequent step involves identifying a suitable characterisation of the ideal $R^2$ using the defining matrices $B_1, \dots, B_d$.
Notice that 
\[R^2=\left\langle(x_1,y_1)\cdot_\phi(x_2,y_2):\, x_1,x_2\in V,\,y_1,y_2\in W\right\rangle=\langle (0,\phi(x_1,x_2)):\, x_1,x_2\in V\rangle\]
is a vector subspace of $\Ann(R)=W$ generated by the set \[\{(0,\phi(v_i,v_j)): 1\leq i,j\leq m\}.\]
Moreover, there exist coefficients $v_{ij}^k\in F$ such that
\[\phi(v_i,v_j)=\sum_{k=1}^{d}{v_{ij}^kw_k}=\sum_{k=1}^{d}{B_k[i,j]w_k}=\sum_{k=1}^d{(v_iB_kv_j^t)w_k},\]
where $v_{ij}^k$ is the $(i,j)$ entry of the matrix $B_k$, i.e.\ $v_{ij}^k=B_k[i,j]=v_iB_kv_j^t$. Specifically, we obtain
\[R^2\simeq\langle(B_1[i,j],\dots,B_d[i,j]):\, 1\leq i,j\leq m\rangle.\]
The next proposition directly follows from the aforementioned isomorphism.
\begin{proposition}\label{R2}
	Let $(R,+,\cdot_\phi)$ be an algebra of nilpotency class two over a field $F$, $R=F^m\oplus F^d$, and let $B_1,\dots,B_d$ be its defining matrices. Then $\dim R^2=\dim\langle B_1,\dots,B_d\rangle$.
\end{proposition}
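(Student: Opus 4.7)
The plan is to piece together the isomorphism already derived in the paragraph preceding the statement with the classical equality of row rank and column rank. The discussion above yields
\[
R^2 \;\simeq\; \bigl\langle\,(B_1[i,j],\dots,B_d[i,j])\;:\;1\leq i,j\leq m\,\bigr\rangle \;\leq\; F^d,
\]
so the left-hand side of the claimed equality is computed as the dimension of the row-span of the $m^2\times d$ matrix $M$ whose rows are indexed by the pairs $(i,j)$ and whose $(i,j)$-th row is $(B_1[i,j],\dots,B_d[i,j])$.

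Next, I would identify $M_m(F)$ with $F^{m^2}$ via the vectorisation map $\mathrm{vec}\colon M_m(F)\to F^{m^2}$ sending a matrix to the list of its entries read off in the order $(i,j)$ used above. This is an $F$-linear isomorphism, so
\[
\dim\langle B_1,\dots,B_d\rangle \;=\; \dim\langle \mathrm{vec}(B_1),\dots,\mathrm{vec}(B_d)\rangle.
\]
The key observation is that, by construction of $M$, the $k$-th column of $M$ is exactly $\mathrm{vec}(B_k)$, so the right-hand side of the displayed identity is the column rank of $M$.

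To conclude, I would simply invoke the equality of row rank and column rank of $M$:
\[
\dim R^2 \;=\; \text{row rank}(M) \;=\; \text{column rank}(M) \;=\; \dim\langle B_1,\dots,B_d\rangle.
\]
There is no real obstacle here; the only thing to be a little careful about is the bookkeeping of the identification between entries $B_k[i,j]$ and the coordinates of $\mathrm{vec}(B_k)$, making sure the same ordering of the index pairs $(i,j)$ is used both for rows of $M$ and for vectorisation, so that ``columns of $M$ $=$ vectorisations of the $B_k$'' is literally true.
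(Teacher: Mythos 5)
Your proposal is correct and follows essentially the same route as the paper: the paper's coefficient matrix $C$ is exactly your matrix $M$ (its rows are the vectors $(B_1[i,j],\dots,B_d[i,j])$ and its columns are the vectorised $B_k$), and the paper likewise concludes via $\dim\langle B_1,\dots,B_d\rangle=\rank(C)=\dim R^2$, i.e.\ the equality of row and column rank. No gaps; your remark about keeping the index ordering consistent is the same bookkeeping the paper handles with the basis $E_{hk}$ and the coefficients $c_{ij}^l=B_l[i,j]$.
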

\begin{proof}
	Let us consider the canonical basis $\left\{E_{11},E_{12}\dots,E_{hk},\dots,E_{mm}\right\}$ of the $(m\times m)$-matrix vector space over $F$, i.e.\ for $1\leq h,k\leq m$, let
	\[E_{hk}[i,j]=\left\{\mat{1&\text{if }(h,k)=(i,j)\\0&\text{otherwise}}\right. .\]
	Then, for each $1\leq l\leq d$, it is uniquely determined an $m^2$-tuple of coefficients \[(c_{11}^l,c_{12}^l,\dots,c_{hk}^l,\dots,c_{mm}^l)\in F^{m^2}\]
	such that
	\[B_l=\sum_{1\leq h,k\leq m}{c_{hk}^lE_{hk}}.\]
	This means that $(B_1[i,j],\dots,B_d[i,j])=(c_{ij}^1,\dots,c_{ij}^d)$ for each $1\leq i,j\leq m$. 
	Let us consider the $m^2\times d$ coefficient matrix denoted by \[C=\pmat{c_{11}^1&\dots&c_{11}^d\\c_{12}^1&\dots&c_{12}^d\\\vdots&&\vdots\\c_{mm}^1&\dots&c_{mm}^d}.\]
The assertion is derived from the observation that
	\[\dim_F\langle B_1,\dots,B_d\rangle=\rank(C)=\dim\left\langle(c_{ij}^1,\dots,c_{ij}^d):\, 1\leq i,j,\leq m\right\rangle=\dim_FR^2.\]
\end{proof}

\begin{example}\label{example1}
	Let us consider the following bilinear map
	\begin{align*}
		\phi:\mathbb{F}_2^4\times\mathbb{F}_2^4\longrightarrow\mathbb{F}_2^4,\quad \phi(x_1,x_2)=(x_1B_1x_2^t,x_1B_2x_2^t,x_1B_3x_2^t,x_1B_4x_2^t),
	\end{align*}
	where
	\[B_1=\pmat{0&0&1&1\\
		0&0&1&0\\
		1&1&0&1\\
		1&0&1&0},\ B_2=\pmat{0&0&0&0\\
		0&0&1&1\\
		0&1&0&0\\
		0&1&0&0},\ B_3=\pmat{0&0&0&1\\
		0&0&1&1\\
		0&1&0&0\\
		1&1&0&0},\ B_4=\pmat{0&1&1&1\\
		1&0&1&1\\
		1&1&0&1\\
		1&1&1&0}.\]
		
		Note that $\rank(B_1)=4$, which implies that the horizontal concatenation is also of rank four, ensuring that $\phi$ is nondegenerate. Then, $R=\mathbb{F}_2^8\simeq\mathbb{F}_2^4\oplus\mathbb{F}_2^4$ endowed with the operation defined by
		\[x\cdot y=\left(0,0,0,0,x\pmat{B_1&0\\0&0}y^t,x\pmat{B_2&0\\0&0}y^t,x\pmat{B_3&0\\0&0},x\pmat{B_4&0\\0&0}y^t\right)\]
		is a nilpotent algebra of class two with annihilator spanned by the last four vector of the canonical basis $\{e_1,e_2,\dots,e_8\}$. Since the matrices $B_i$ are linearly independent, the space $R^2=\left\langle(0,\phi(x_1,x_2): x_1,x_2\in\mathbb{F}_2^4)\right\rangle$ coincide with the annihilator. It is important to recognise that in general, the image of a bilinear map is not necessarily a vector space. In fact, when computing all possible dot products, we find that $\#\left\{x\cdot y:x,y\in R\right\}=\#\left\{\phi(x_1,x_2): x_1,x_2\in \mathbb{F}_2^4\right\}=15$.
		
	As an illustration of an algebra $(S,+,\cdot)$ where $S^2$ is properly included in $\Ann(S)$, we can examine the dot product defined by the following bilinear map
		\[\psi(x_1,x_2)=\left(x_1B_1x_2^t,x_1B_2x_2^t,x_1B_3x_2^t,x_1(B_1+B_2+B_3)x_2^t\right)\]
		and by the sequence of defining matrices $(B_1,B_2,B_3,B_1+B_2+B_3)$. Clearly, $\Ann(S)=\langle e_5,e_6,e_7,e_8\rangle$. By the  previous proposition, $S^2$ is a space of dimension 3 over $\mathbb{F}_2$ and we can easily determine its generators by writing
		\begin{align*}
			(B_1,B_2,B_3,B_1+B_2+B_3)=(B_1,0,0,B_1)+(0,B_2,0,B_2)+(0,0,B_3,B_3).
		\end{align*}
		Hence,  $\left\{(1,0,0,1),(0,1,0,1),(0,0,1,1)\right\}$ is a system of generator of $\left\langle\psi(x_1,x_2):\,x_1,x_2\in \mathbb{F}_2^4\right\rangle$ and 
		$e_5+e_8=(0,0,0,0,1,0,0,1)$, $e_6+e_8=(0,0,0,0,0,1,0,1)$, $e_7+e_8=(0,0,0,0,0,0,1,1)$ constitute a system of generators of $S^2\simeq\langle(0,\psi(x_1,x_2)):\, x_1,x_2\in \mathbb{F}_2^4\rangle$.
\end{example}

The concepts of equivalence between bilinear maps and congruence between matrix spaces presented below enable us to provide a characterisation of the isomorphisms of nilpotent algebras of class two and a description of their automorphism groups.

\begin{definition}\label{defequiv}
		Two bilinear maps $\phi,\psi: F^m\times F^m\longrightarrow F^d$ are \emph{equivalent} if 
		\begin{align}
			\forall\ x_1,x_2\in F^m\quad \phi(x_1,x_2)D=\psi(x_1A,x_2A)
		\end{align}
		for some $A\in GL(m,F)$, $D\in GL(d,F)$, i.e.\ the following diagram commutes:
		\[\begin{tikzcd}
		F^m\times F^m \arrow[r, "\phi"] \arrow[d, "A\times A"]
		& F^d \arrow[d, "D"] \\
		F^m\times F^m \arrow[r, "\psi"]
		& F^d
		\end{tikzcd}\quad .\]
		
		Two matrix spaces $\langle B_i\rangle_i$ and $\langle C_j\rangle_j$ are \emph{congruent} if there exist an invertible matrix $A$ such that
		\[A\langle C_j\rangle_jA^t=\langle B_i\rangle_i,\]
		i.e.\ $AC_jA^t\in\langle B_i\rangle_i$ for each $j$.
	\end{definition}

We are now ready to express the main result of this section, which characterises the isomorphisms of nilpotent algebras of class two as equivalences between the associated nondegenerate bilinear maps and congruences between the spaces of their defining matrices.

\begin{theorem}\label{congruent}
Let $(R,+,\cdot_\phi)$ and $(S,+,\cdot_\psi)$ be two nilpotent algebras of class two over a field $F$, with underlying vector space $F^m\oplus F^d$ and defining matrices $B_1,\dots,B_d$ and $C_1,\dots, C_d$ respectively. The following statements are equivalent:
		\begin{enumerate}[(i)]
			\item \label{congruent1}  $R$ and $S$ are isomorphic algebras;
			\item \label{congruent2} $\phi$ and $\psi$ are equivalent;
			\item \label{congruent3} $\langle B_1,\dots,B_d\rangle$ and $\langle C_1,\dots, C_d\rangle$ are congruent.
		\end{enumerate}
\end{theorem}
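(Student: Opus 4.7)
My plan is to establish the two bi-implications (i)\,$\Leftrightarrow$\,(ii) and (ii)\,$\Leftrightarrow$\,(iii) separately, exploiting the explicit description of both algebras on the vector space $F^m \oplus F^d$ with annihilator $0 \oplus F^d$.

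For (i)\,$\Leftrightarrow$\,(ii), I would first observe that any algebra isomorphism $f \colon R \to S$ preserves annihilators, hence $f(0 \oplus F^d) = 0 \oplus F^d$. Writing $f$ in block form with respect to the decomposition $F^m \oplus F^d$ then forces $f(x,y) = (xA,\,xB + yD)$ with $A \in \GL(m,F)$, $D \in \GL(d,F)$ (invertibility follows from the bijectivity of $f$ combined with preservation of the annihilator) and $B$ an arbitrary $m \times d$ matrix. Unwinding the condition $f((x_1,y_1) \cdot_\phi (x_2,y_2)) = f(x_1,y_1) \cdot_\psi f(x_2,y_2)$ reduces, after discarding the trivial first coordinate, to the single identity $\phi(x_1, x_2)\,D = \psi(x_1 A, x_2 A)$, which is exactly the equivalence condition of Definition~\ref{defequiv}. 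Conversely, any such equivalence yields an isomorphism via $f(x, y) := (xA, yD)$ by direct verification.

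For (ii)\,$\Rightarrow$\,(iii), I would expand $\phi(x_1, x_2)\,D = \psi(x_1 A, x_2 A)$ in coordinates: the $k$-th component reads $x_1 \bigl(\sum_{j=1}^{d} D_{jk} B_j\bigr) x_2^t = x_1\,(A C_k A^t)\,x_2^t$ for every $x_1, x_2 \in F^m$, forcing $A C_k A^t = \sum_j D_{jk} B_j \in \langle B_1, \ldots, B_d\rangle$ and hence $A \langle C_1, \ldots, C_d\rangle A^t \subseteq \langle B_1, \ldots, B_d\rangle$. Invertibility of $D$ supplies the reverse inclusion, yielding congruence of the two matrix spans.

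The main obstacle is the converse (iii)\,$\Rightarrow$\,(ii), where one must promote equality of spans to the existence of an \emph{invertible} $d \times d$ matrix $D$ satisfying $A C_k A^t = \sum_j D_{jk} B_j$. This is subtle precisely when $(B_1, \ldots, B_d)$ or $(C_1, \ldots, C_d)$ fail to be linearly independent, so that the evaluation maps $\pi_B, \pi_C \colon F^d \to V := \langle B_1, \ldots, B_d\rangle$ (sending $e_j \mapsto B_j$ and $e_k \mapsto A C_k A^t$ respectively) have nontrivial kernels. Congruence forces $\dim \ker \pi_B = \dim \ker \pi_C = d - \dim V$, and I would build $D$ by prescribing it to map $\ker \pi_C$ isomorphically onto $\ker \pi_B$ via any chosen linear bijection, and a fixed complement of $\ker \pi_C$ isomorphically onto a complement of $\ker \pi_B$ via $\sigma_B \circ \pi_C$, where $\sigma_B$ is a linear right-inverse of $\pi_B$. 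By construction $D$ is bijective and $\pi_B \circ D = \pi_C$, which in coordinates is exactly the required matrix identity.
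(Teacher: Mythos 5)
Your proof is correct, and its overall route is the same as the paper's: both pass through the block form of an isomorphism on $F^m\oplus F^d$, reduce the multiplicativity condition to the identity $\phi(x_1,x_2)D=\psi(x_1A,x_2A)$, and translate that identity coordinatewise into $AC_kA^t=\sum_j D[j,k]B_j$. Where you genuinely add value is at the two places the paper treats briskly. First, you obtain the vanishing of the lower-left block from preservation of the annihilator $0\oplus F^d$ (as characterised in Theorem~\ref{th1}); the paper reads it off the product equation, which literally only gives that the block kills $R^2$, so your derivation is the cleaner one when $R^2\subsetneq\Ann(R)$. Second, and more substantially, in (iii)$\Rightarrow$(ii) the paper asserts that the coefficient matrix $D$ is invertible because $X\mapsto AXA^t$ is an isomorphism between $\langle C_j\rangle_j$ and $\langle B_i\rangle_i$; but when the defining matrices are linearly dependent (a case the paper itself exhibits, e.g.\ the sequence $(B_1,B_2,B_3,B_1+B_2+B_3)$, or sequences containing $0$), the coefficients expressing $AC_kA^t$ in the $B_i$ are not unique and an arbitrary choice can produce a singular $D$. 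Your construction — mapping $\ker\pi_C$ isomorphically onto $\ker\pi_B$ and a complement onto the image of a section of $\pi_B$, so that $\pi_B\circ D=\pi_C$ with $D$ bijective — is exactly the choice needed to make the paper's assertion rigorous, at the cost of a slightly longer argument; the paper's version buys brevity but leaves this selection implicit.
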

		\begin{proof}
		Let us first prove that \eqref{congruent1} $\implies$ \eqref{congruent2} $\implies$ \eqref{congruent3}.
			Let $f:R\longrightarrow S$,
			 \[ f=\pmat{A&C\\C'&D}\in \gl(F^m\oplus F^d),\] be an isomorphism of algebras. Then, for each $(x_1,y_1),(x_2,y_2)\in F^m\oplus F^d$ the following equations are equivalent
			\begin{align*}
				\big((x_1,y_1)\cdot_\phi(x_2,y_2)\big)f&=(x_1,y_1)f\cdot_\psi(x_2,y_2)f,\\
				(0,\phi(x_1,x_2))f&=(0,\psi(x_1f,x_2f)),\\
				(\phi(x_1,x_2)C',\phi(x_1,x_2)D)&=(0,\psi(x_1A+y_1C',x_2A+y_2C')).
			\end{align*}
			In particular, the last one implies that $C'=0$ and so $A,D$ are invertible matrices such that
			\[\phi(x_1,x_2)D=\psi(x_1A,x_2A),\]
			i.e.\ $\phi$ and $\psi$ are equivalent. From this it follows that
			\[\left(\sum_i{x_1B_ix_2^tD[i,1]},\dots,\sum_i{x_1B_ix_2^tD[i,d]}\right)=(x_1AC_1A^tx_2^t,\dots,x_1AC_dA^tx_2^t),\]
			and so for each $1 \leq j \leq d$
			\begin{align}\label{eq2}AC_jA^t=\sum_i{B_iD[i,j]}.\end{align}
			Therefore $AC_jA^t\in\big\langle B_1,\dots, B_d\big\rangle$, i.e.\ $\langle B_i\rangle_i$ and $\langle C_i\rangle_i$ are congruent.\\ 
			
			Let us now show that \eqref{congruent3} $\implies$ \eqref{congruent2} $\implies$ \eqref{congruent1}.
			If the spaces generated by the defining matrices of $R$ and $S$ are congruent, then there exist a matrix $A\in \gl(m,F)$ and coefficients $D[i,j]\in F$ such that Eq.~\eqref{eq2} holds. Since the map
			\[\varphi_A:\langle C_i\rangle_i\longrightarrow\langle B_i\rangle_i,\quad C_i\mapsto AC_iA^t\]
			is an isomorphism of vector spaces, the coefficient matrix $D$ is invertible. This implies that $A$ and $D$ are invertible matrices that establish the equivalence of the bilinear maps $\phi$ and $\psi$. Consequently, for every $(x_1,y_1),(x_2,y_2)\in F^m\oplus F^d$:
			\begin{align*}
				\big((x_1,y_1)\cdot_\phi(x_2,y_2)\big)\pmat{A&0\\0&D}= &(0,\phi(x_1,x_2)D)\\
				= &(0,\psi(x_1A,x_2A))=(x_1,y_1)\pmat{A&0\\0&D}\cdot_\psi(x_2,y_2)\pmat{A&0\\0&D}.
			\end{align*}
			Thus $\pmat{A&0\\0&D}$ is an isomorphism of algebras.
\end{proof}

\begin{example}
Let us examine the two algebras from Example \ref{example1}, characterised by the defining matrices $(B_1,B_2,B_3,B_4)$ and $(B_1,B_2,B_3,B_1+B_2+B_3)$. Due to the discrepancy in the dimensions of the matrix spaces generated by these sequences (4 and 3, respectively), they are not congruent, and consequently, the algebras are not isomorphic.
\end{example}

Considering that the dimension of the matrix space aligns with the dimension of the $R^2$ space (as indicated in Proposition~\ref{R2}), it is evident that two algebras with $R^2$ spaces of differing dimensions are not isomorphic. Conversely, if $\beta\in\aut(R,+,\cdot_\phi)$, then $R^2\beta=R^2$.
However, there exist nonisomorphic algebras with $R^2$ space of the same dimension, as shown in the following example.

\begin{example}
Let us consider the two algebras with underlying vector space $\mathbb{F}_2^4\oplus\mathbb{F}_2^4$ and defined by the following sequences of defining matrices over $\mathbb{F}_2$:
\[B_1=\pmat{0&1&0&0\\1&0&0&0\\0&0&0&0\\0&0&0&0},\quad B_2=\pmat{0&0&0&0\\0&0&0&0\\0&0&0&1\\0&0&1&0},\quad B_3=B_1+B_2,\quad B_4=0,\]
\[C_1=\pmat{0&0&1&0\\
	0&0&1&1\\
	1&1&0&0\\
	0&1&0&0},\quad
C_2=\pmat{0&0&1&1\\
	0&0&0&1\\
	1&0&0&0\\
	1&1&0&0},\quad C_3=C_1+C_2,\quad C_4=0.\]
Note that $\rank(C_1)=\rank(C_2)=\rank(C_1+C_2)=4$.
However, for each $A\in\gl(4,\mathbb{F}_2)$, $\rank(AB_1A^t)=\rank(B_1)=2$, which means that $AB_1A^t\notin\langle C_i\rangle_i$. Consequently, the matrix spaces $\langle B_i\rangle_i$ and $\langle C_i\rangle_i$ are not congruent, leading to the conclusion that the algebras characterised by these sequences of defining matrices are not isomorphic.
\end{example}

The previous characterisation of isomorphisms provided in Theorem~\ref{congruent} also offers a description the automorphisms of a nilpotent algebra of class two, using the reflexivity of the equivalence between bilinear maps and the congruence between matrix spaces. Specifically, we say that the matrices $A$ and $D$ establish the \emph{self-equivalence} of a bilinear map when the maps $\phi$ and $\psi$ from Definition \ref{defequiv} coincide. Similarly, $A$ and $D$ establish the \emph{self-congruence} when the matrix spaces $\langle B_i\rangle_i$ and $\langle C_i\rangle_i$ from Definition~\ref{defequiv} coincide.

\begin{corollary}\label{cor:aut}
Let $(R,+,\cdot_\phi)$ be a nilpotent algebra of class two over a field $F$, and let $B_1,\dots,B_d$ be its defining matrices. Then the automorphism group $\aut(R,+,\cdot_\phi)$ consists of the matrices of the form 
\[\pmat{A&C\\0&D}\in \gl(R)\]
which establish the self-equivalence of the bilinear map $\phi$ or, equivalently, the self-congruence of the space generated by its defining matrices.
\end{corollary}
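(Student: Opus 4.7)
The plan is to specialise Theorem~\ref{congruent} to the case $S = R$, $\psi = \phi$, $C_i = B_i$, paying attention to what happens to the off-diagonal block that did not appear in the construction side of that proof.

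First I would invoke the block-structure argument already embedded in the proof of Theorem~\ref{congruent}: writing a candidate automorphism as $f = \pmat{A & C \\ C' & D} \in \gl(F^m \oplus F^d)$, the multiplicativity identity
\[
\bigl((x_1,y_1) \cdot_\phi (x_2,y_2)\bigr) f \;=\; (x_1,y_1)f \cdot_\phi (x_2,y_2)f
\]
forces $\phi(x_1,x_2)\,C' = 0$ for all $x_1, x_2 \in F^m$, and nondegeneracy of $\phi$ then gives $C' = 0$. Since $f$ is invertible, both diagonal blocks $A$ and $D$ are invertible, so $f$ has the claimed upper-triangular shape.

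Next I would substitute the shape $\pmat{A & C \\ 0 & D}$ back into the multiplicativity identity. A short direct computation collapses both sides to expressions in the $V$-components only: the left-hand side becomes $(0, \phi(x_1, x_2) D)$ and the right-hand side becomes $(0, \phi(x_1 A, x_2 A))$. The block $C$ has disappeared and can be chosen freely, while the residual constraint $\phi(x_1 A, x_2 A) = \phi(x_1, x_2) D$ is exactly the self-equivalence of $\phi$. Using the translation already carried out in the proof of Theorem~\ref{congruent}, this is in turn equivalent to $A B_i A^t \in \langle B_1, \dots, B_d \rangle$ for each $i$, i.e.\ to the self-congruence of $\langle B_1, \dots, B_d \rangle$ realised by $A$. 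The converse direction, that any matrix of this block form satisfying the self-equivalence relation is an automorphism, is read off from the very same computation.

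The step that is mildly delicate, and that genuinely goes beyond what was stated in Theorem~\ref{congruent}, is the observation that the off-diagonal block $C$ plays no role in the constraint and may be chosen arbitrarily. This is the algebraic reflection of the fact that any $\F$-linear map sending a complement $V$ of $\Ann(R)$ into $\Ann(R)$ and acting as the identity on $R/\Ann(R)$ is automatically an algebra automorphism, since $\cdot_\phi$ sees only the projections of its arguments onto $V$.
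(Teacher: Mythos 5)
Your proposal is correct and follows essentially the same route as the paper, whose entire proof is the specialisation of Theorem~\ref{congruent} to $\psi=\phi$, $C_i=B_i$; your additional observation that the off-diagonal block $C$ drops out of the multiplicativity constraint (so that self-equivalence of $(A,D)$ is the only condition and $C$ may be chosen freely) is exactly the content the paper leaves implicit in the word ``immediately''. One small imprecision: from $\phi(x_1,x_2)C'=0$ alone, nondegeneracy of $\phi$ only yields that $C'$ annihilates $R^2\subseteq\Ann(R)$, which may be a proper subspace of $F^d$; the full conclusion $C'=0$ requires the $W$-component of the identity together with invertibility of $f$ (equivalently, that an isomorphism carries $\Ann(R)=0\oplus F^d$ onto the annihilator of the target), which is precisely the step embedded in the proof of Theorem~\ref{congruent} that you invoke, so the argument stands.
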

\begin{proof}
	The proof follows immediately from the previous theorem by considering $\phi=\psi$, $B_i=C_i$ for each $1 \leq i \leq d$.
\end{proof}
			
We conclude this section with the following result, which specialises the description of the automorphism group of a nilpotent algebra of class two to the case where the ideal $R^2$ is uni-dimensional, or equivalently, the space of defining matrices is uni-dimensional, denoted by $B_1,\dots,B_d\in\langle B\rangle$. In this scenario, the condition that two matrices $A$ and $D$ establish the self-congruence of the matrix space $\langle B\rangle$ translates into two equivalent conditions: $A$ must be an isometry of $B$, and $D$ must fix the generator of $R^2$. It is important to note that this case is particularly relevant in cryptographic applications, owing to the attacker's capability to control the size of the error in the alternative differential attack.
	
\begin{theorem}\label{uni}
		Let $(R,+,\cdot_\phi)$ be an algebra of nilpotency class two over a field $F$, let $R=F^m\oplus F^d$, with uni-dimensional space $R^2=\langle(0,b)\rangle$ and defining matrices $B_1,\dots,B_d\in\langle B\rangle$.
		Then, $\rank(B)=m$ and \[\pmat{A&C\\0&D }\in \gl(F^m\oplus F^d)\] is an automorphism of $R$ if and only if
			$ABA^t=B$ and $bD=b$.
			\end{theorem}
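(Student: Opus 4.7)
The plan is to invoke Corollary~\ref{cor:aut} and specialise the self-congruence condition to the uni-dimensional setting. First I would establish $\rank(B)=m$. Since $\langle B_1,\dots,B_d\rangle=\langle B\rangle$, each defining matrix can be written as $B_i=\lambda_i B$ for some scalars $\lambda_i\in F$, not all zero. The nondegeneracy of $\phi$ forces $\rank\pmat{B_1 & \cdots & B_d}=m$, and since $\pmat{\lambda_1 B & \cdots & \lambda_d B}$ has the same column space as $B$, this forces $\rank(B)=m$. Expanding $\phi(x,y)=(xBy^t)(\lambda_1,\dots,\lambda_d)$ also shows that $(\lambda_1,\dots,\lambda_d)$ spans the line $R^2=\langle(0,b)\rangle$, so up to rescaling I can identify $b=(\lambda_1,\dots,\lambda_d)$.

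The automorphism characterisation then follows by transcribing Eq.~\eqref{eq2} from the proof of Theorem~\ref{congruent}, namely $AB_jA^t=\sum_i B_i D[i,j]$ for $j=1,\dots,d$, into the uni-dimensional setting. Substituting $B_i=\lambda_i B$ turns the $j$-th equation into $\lambda_j(ABA^t)=\bigl(\sum_i\lambda_i D[i,j]\bigr)B$. Writing $ABA^t=\mu B$, which is possible because $\langle B\rangle$ is one-dimensional and self-congruence via Corollary~\ref{cor:aut} guarantees $ABA^t\in\langle B\rangle$, the $d$ scalar equations collapse into the single vector identity $\mu b=bD$. Over $\mathbb{F}_2$ the nonzero scalar $\mu$ must equal $1$, yielding the stated conditions $ABA^t=B$ and $bD=b$. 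Conversely, assuming $ABA^t=B$ and $bD=b$, substituting $B_j=\lambda_j B$ into both sides of Eq.~\eqref{eq2} immediately recovers the required equalities, so the block matrix is an algebra automorphism by Corollary~\ref{cor:aut}.

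I do not anticipate a substantial obstacle here, since the argument is essentially bookkeeping once Corollary~\ref{cor:aut} reduces the problem to the self-congruence equations. The only delicate point is recognising that the $d$ scalar conditions on $D$ coming from Eq.~\eqref{eq2} bundle into the single vector equation $bD=b$ via the identification of $b$ with the coefficient tuple $(\lambda_1,\dots,\lambda_d)$, and that the scalar $\mu$ relating $ABA^t$ to $B$ is forced to be $1$ in the binary setting.
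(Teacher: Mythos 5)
Your proposal is correct and follows essentially the same route as the paper: both reduce the statement to the self-equivalence/self-congruence criterion of Corollary~\ref{cor:aut} and then specialise to the one-dimensional span $\langle B\rangle$, your identification of $b$ with the coefficient tuple $(\lambda_1,\dots,\lambda_d)$ playing the same role as the paper's observation that $bD=b$ is equivalent to $\phi(x_1,x_2)D=\phi(x_1,x_2)$ while $ABA^t=B$ is equivalent to $\phi(x_1A,x_2A)=\phi(x_1,x_2)$. Your explicit tracking of the scalar $\mu$ in $ABA^t=\mu B$ is in fact slightly more careful than the paper's own argument, which tacitly takes $\mu=1$; as you note, killing $\mu$ genuinely uses $F=\mathbb{F}_2$ (the case needed for Corollary~\ref{SpFix}), since over a larger field self-equivalence only forces $ABA^t=\mu B$ and $bD=\mu b$ for a common nonzero scalar.
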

		\begin{proof}
			Since the concatenation matrix $\pmat{B_1&\dots&B_d}$ has rank $m$ and $B_i\in\langle B\rangle$, then $\rank(B)=m$. Let $A$ and $D$ be two invertible matrices of size $m$ and $d$ respectively. Recall that \[R^2=\left\langle (0,\phi(x_1,x_2)):\ x_1,x_2\in F^m\right\rangle=\left\langle (0,b)\right\rangle.\]
			 Clearly $ABA=B$ if and only if, for each $x_1,x_2\in F^m$ and $1 \leq i \leq d$ we have \[x_1AB_iA^tx_2^t=x_1B_ix_2^t,\]
			i.e.\ $\phi(x_1A,x_2A)=\phi(x_1,x_2)$. On the other hand, $bD=b$ if and only if $\phi(x_1,x_2)D=\phi(x_1,x_2)$. Therefore, the two equation holds if and only if $A$ and $D$ establish the self-equivalence of $\phi$.
		\end{proof}
		
		Assume that $(R_1,+,\cdot_\phi)$ and $(R_2,+,\cdot_\psi)$ are two nilpotent algebras of class two with uni-dimensional spaces $R_1^2=\langle(0,b_1)\rangle$ and $R_2^2=\langle(0,b_2)\rangle$, respectively, and spaces of defining matrices $\langle B_1\rangle$ and $\langle B_2\rangle$.
		It is worth noting that if $R_1$ and $R_2$ are isomorphic, then the set of isomorphisms is determined by the matrices $A$ and $D$ such that $AB_2A^t=B_1$ and $b_1D=b_2$.


\section{Binary alternating algebras}\label{sec:binalg}
	
In this section, we specialise the description  of the automorphisms (see~Corollary~\ref{cor:aut} and Theorem~\ref{uni}) of nilpotent algebras of class two to the case of cryptographic interest. Therefore, from now on we consider finite dimensional algebras over the field with two elements $\mathbb{F}_2$ satisfying the following polynomial identities:
	\begin{itemize}
		\item $x\cdot y\cdot z=0$ (nilpotency class two);
		\item $x\cdot x=0$ (nil index two).
	\end{itemize}
We have already noted after Definition~\ref{def:families} that from the previous assumptions $(R,+,\cdot)$ is also commutative, and  we called the class of such algebras as the class \baa\ of binary alternating algebras over $R$.
As a first step, we identify a suitable matrix space for the defining matrices of such algebras.
	
	\begin{proposition}
		The defining matrices $B_1,\dots,B_d$ of a binary alternating algebra $(\mathbb{F}_2^m\oplus\mathbb{F}_2^d,+,\cdot_\phi)$ are symmetric and zero-diagonal.
		\end{proposition}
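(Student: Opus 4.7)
The plan is to read off symmetry and zero-diagonality directly from the two defining axioms (\ref{eq:alter}) and the derived commutativity of $\cdot$, via the explicit formula $(x_1,y_1)\cdot_\phi (x_2,y_2)=(0,\phi(x_1,x_2))$ and $\phi(x_1,x_2)=(x_1B_1x_2^t,\dots,x_1B_dx_2^t)$.

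First I would establish symmetry. As noted right after Definition~\ref{def:families}, the alternating property together with bilinearity forces $x\cdot y=y\cdot x$. Applied to pure $V$-vectors $(x_1,0),(x_2,0)$, this yields $\phi(x_1,x_2)=\phi(x_2,x_1)$ for every $x_1,x_2\in\mathbb{F}_2^m$. Reading the $k$-th component gives $x_1B_kx_2^t=x_2B_kx_1^t=x_1B_k^t x_2^t$ for all $x_1,x_2$, whence $B_k=B_k^t$.

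Next I would handle the diagonal. The alternating identity $(x,0)\cdot_\phi(x,0)=0$ gives $\phi(x,x)=0$, i.e.\ $xB_kx^t=0$ for every $x\in\mathbb{F}_2^m$ and every $k$. Expanding using the already established symmetry and working in characteristic two,
\[
xB_kx^t=\sum_{i}B_k[i,i]\,x_i^2+\sum_{i<j}\bigl(B_k[i,j]+B_k[j,i]\bigr)x_ix_j=\sum_{i}B_k[i,i]\,x_i,
\]
since $B_k[i,j]+B_k[j,i]=0$ and $x_i^2=x_i$ in $\mathbb{F}_2$. Requiring this linear form in $x_1,\dots,x_m$ to vanish identically forces $B_k[i,i]=0$ for each $i$, which is precisely the zero-diagonal condition.

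There is no real obstacle here: the statement is essentially a translation of the axioms through the coordinate formula for $\phi$, and the only mild subtlety is remembering that in characteristic two symmetry does not by itself imply zero diagonal, so one really has to invoke the stronger alternating axiom (\ref{eq:alter}) to pin down the diagonal entries.
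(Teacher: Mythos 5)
Your proof is correct and follows essentially the same route as the paper: both arguments translate the alternating axiom and the derived commutativity of $\cdot$ into the identities $x_1B_kx_1^t=0$ and $x_1B_kx_2^t=x_2B_kx_1^t$ and read off the matrix conditions. The only difference is cosmetic: the paper simply evaluates these identities at the canonical basis vectors $e_i,e_j$, so your expansion of the quadratic form $xB_kx^t$ over $\mathbb{F}_2$ is a harmless but unnecessary detour for the zero-diagonal part.
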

		\begin{proof}
			For every $(x_1,y_1),(x_2,y_2)\in\mathbb{F}_2^m\oplus\mathbb{F}_2^d$ we have
			\[(0,0)=(x_1,y_1)\cdot(x_1,y_1)=(0,\phi(x_1,x_1))=(0,(x_1B_1x_1^t,\dots,x_1B_dx_1^t))\]
			and
			\[(x_1,y_1)\cdot(x_2,y_2)=(0,(x_1B_1x_2^t,\dots,x_1B_dx_2^t))=(0,(x_2B_1x_1^t,\dots,x_2B_dx_1^t))=(x_2,y_2)\cdot(x_1,y_1).\]
			Thus, $x_1B_kx_1^t=0$ and $x_1B_kx_2^t=x_2B_kx_1^t$. In particular, $0=e_iB_ke_i^t=B_k[i,i]$ and $B_k[i,j]=e_iB_ke_j^t=e_jB_ke_i^t=B_k[j,i]$ for each vector $e_i,e_j$ of the canonical basis.
		\end{proof}
For the field $\mathbb F_2$, symmetric matrices with zero diagonal are \emph{skew-symmetric}, and we now briefly revisit some fundamental facts about them.
	
\subsection{On skew-symmetric matrices over $\mathbb{F}_2$} 
	A skew-symmetric (or antisymmetric) matrix $B$ over a field $F$ is a square matrix whose transpose equals its negative, i.e.\ it satisfies the condition $B^t=-B$. Clearly, when $F=\mathbb{F}_2$, $B$ is skew-symmetric if and only if $B$ is symmetric and zero-diagonal. We denote by $\Lambda_m$ the matrix subspace of symmetric and zero-diagonal $m\times m$ matrices over $\mathbb{F}_2$.
	
	It is well known that $AXA^t\in \Lambda_m$ for every $m\times m$ matrix $A$ over $\mathbb{F}_2$ and $X\in\Lambda_m$ (see, e.g., Mackey et al.~\cite[Lemma 3.4]{Ma13}). Thus, for each $A\in\gl(m,\mathbb{F}_2)$, the map $X\mapsto AXA^t$ is an automorphism of $\Lambda_m$. In particular, $A\Lambda_mA^t=\Lambda_m$, and for each subspace $\mathcal{B}<\Lambda_m$, we have $\dim_{\mathbb{F}_2}(A\mathcal{B}A^t)=\dim_{\mathbb{F}_2}(\mathcal{B})$.
	
	Any two $m\times m$ matrices $B,C\in\Lambda_m$ of the same rank are congruent, i.e.\ $ABA^t=C$ for some invertible matrix $A$ (\cite[Theorem 5.4]{Ma13}) and, vice versa, if $B$ and $C$ are congruent, then $\rank (B)=\rank (C)$. In particular, two uni-dimensional matrix subspaces of $\Lambda_m$ are congruent if and only if their generators have the same rank.
	
	Recall that a $m\times m$ skew symmetric matrix $B$ has full rank if and only if $m$ is an even integer (\cite[Theorem 5.4]{Ma13}), and in that case, $B$ is called \emph{symplectic}. A symplectic matrix $B$ defines a nondegenerate \emph{symplectic bilinear form} \[\phi_B:\mathbb{F}_2^m\times \mathbb{F}_2^m\longrightarrow\mathbb{F}_2,\quad (x,y)\mapsto x_1Bx_2^t.\] 
	The \emph{symplectic group} $\syp(B)$ of $B$ is the subgroup of $\gl(m,\mathbb{F}_2)$ consisting of all invertible matrices $A$ which preserve $\phi_B$, i.e.\ $\phi_B(x_1A,x_2A)=\phi_B(x_1,x_2)$ for every $x_1,x_2\in\mathbb{F}_2^m$. Thus
	\[\syp(B)=\{A\in\gl(m,\mathbb{F}_2) \mid  ABA^t=B\}.\]
	In particular, any two symplectic matrices $B,C$ are congruent and $\syp(B)$, $\syp(C)$ are isomorphic.

	\subsection{The uni-dimensional case} 
	Let $(R,+,\cdot_\phi) \in$ \baa\ be a binary alternating algebra with underlying vector space $R=\mathbb{F}_2^m\oplus\mathbb{F}_2^d$, uni-dimensional space $R^2=\langle(0,b)\rangle$ and defining matrices $B_1,\dots, B_d\in\langle B\rangle=\{0,B\}$. Since $\phi$ is nondegenerate, then $B$ is a symplectic matrix and $m$ is an even integer.
	
	We denote by $\syp(\phi)$ the symplectic group of the defining matrix $B$ and by \[\fix(b)=\{D\in\gl(d,\mathbb{F}_2) \mid  bD=b\}\] the subgroup of $\gl(d,\mathbb{F}_2)$ of matrices fixing $b$.\\
	
We are ready to illustrate the main result of this section which characterises the automorphism group of a binary alternating algebra $(R,+,\cdot)$ with $\dim(R^2) = 1$ as the semidirect product \[\big(\syp(\phi)\times \fix(b)\big)\ltimes \left\langle\spmat{1_m&C\\0&1_d}:\ C\in\mathbb{F}_2^{m\times d}\right\rangle.\]
The subsequent results are derived as corollaries of Theorem~\ref{uni} and Theorem~\ref{congruent}, respectively.

\begin{corollary}\label{SpFix}
Let $(R,+,\cdot_\phi) \in $ \emph{\baa} be a binary alternating algebra over $R=\mathbb{F}_2^m\oplus\mathbb{F}_2^d$ with uni-dimensional space $R^2=\langle (0,b)\rangle$. Then $\aut(R,+,\cdot_\phi)$ is the semidirect product
		\[\left(\syp(\phi)\times \fix(b)\right)\ltimes \left\langle\spmat{1_m&C\\0&1_d}:\ C\in\mathbb{F}_2^{m\times d}\right\rangle,\]
			i.e.\
		\[\aut(R,+,\cdot_\phi)= \left\{\left[\begin{array}{c|c}A&C\\\hline 0&D\end{array}\right]:\, A\in \syp(\phi),\, C\in \mathbb{F}_2^{m\times d},\,D\in \fix(b)\right\}.\]
	\end{corollary}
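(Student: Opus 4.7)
The plan is essentially to invoke Theorem~\ref{uni} and then to recognise the resulting collection of matrices as the claimed semidirect product. In the present uni-dimensional setting Theorem~\ref{uni} guarantees that the automorphisms of $(R,+,\cdot_\phi)$ are exactly the block matrices $\spmat{A&C\\0&D}\in\gl(R)$ satisfying $ABA^t=B$ and $bD=b$. By the very definitions of $\syp(\phi)$ and $\fix(b)$, these two conditions translate into $A\in\syp(\phi)$ and $D\in\fix(b)$, respectively. Since $C$ ranges freely over $\mathbb{F}_2^{m\times d}$, the explicit set-theoretic description asserted in the corollary drops out at once.

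To unpack this set as a semidirect product I would introduce the subgroups
\[
N=\left\{\spmat{1_m&C\\0&1_d}:C\in\mathbb{F}_2^{m\times d}\right\},\qquad L=\left\{\spmat{A&0\\0&D}:A\in\syp(\phi),\ D\in\fix(b)\right\},
\]
and then check four routine points: that $N$ is an elementary abelian subgroup under matrix multiplication (the $C$-blocks simply add); that $L$ is a subgroup canonically isomorphic to $\syp(\phi)\times\fix(b)$; that $L\cap N=\{\mathrm{id}\}$; and that every automorphism factors as a product of an element of $L$ and an element of $N$ via the identity
\[
\spmat{A&C\\0&D}=\spmat{A&0\\0&D}\spmat{1_m&A^{-1}C\\0&1_d}.
\]

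The only point requiring a short computation is the normality of $N$: conjugating $\spmat{1_m&C\\0&1_d}$ by $\spmat{A&0\\0&D}$ yields $\spmat{1_m&ACD^{-1}\\0&1_d}$, which still lies in $N$. Combining these observations gives the factorisation $\aut(R,+,\cdot_\phi)=L\ltimes N$ in the form stated. I do not foresee any genuine obstacle: all the substantive content has already been absorbed in Theorem~\ref{uni}, and what remains is elementary block-matrix bookkeeping together with the translation of the two defining conditions into membership in $\syp(\phi)$ and $\fix(b)$.
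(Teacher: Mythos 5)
Your proposal is correct and follows essentially the same route as the paper, whose entire proof is the single observation that the statement is Theorem~\ref{uni} specialised to binary alternating algebras. The extra block-matrix bookkeeping you supply (the subgroups $N$ and $L$, the factorisation $\spmat{A&C\\0&D}=\spmat{A&0\\0&D}\spmat{1_m&A^{-1}C\\0&1_d}$, and the conjugation computation showing $N$ is normal) is accurate and merely makes explicit the semidirect-product structure the paper leaves to the reader.
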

		\begin{proof}
The claim is obtained by applying Theorem~\ref{uni} to the case of binary alternating algebras.	
		\end{proof}
\begin{corollary}\label{singleclass}
There is a single isomorphism class of binary alternating algebras with underlying vector space $\mathbb{F}_2^m\oplus\mathbb{F}_2^d$ and uni-dimensional space $R^2$.
\end{corollary}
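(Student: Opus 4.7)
The plan is to reduce the claim, via Theorem~\ref{congruent}, to the statement that any two one-dimensional subspaces of $\Lambda_m$ generated by a symplectic matrix are congruent, and then to invoke the classical fact (cited just before the corollary from Mackey et al.) that any two symplectic matrices of the same size over $\mathbb{F}_2$ are congruent.

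First, I would fix two binary alternating algebras $(R_1,+,\cdot_\phi)$ and $(R_2,+,\cdot_\psi)$ with common underlying vector space $\mathbb{F}_2^m\oplus\mathbb{F}_2^d$ and with $\dim R_i^2=1$ for $i=1,2$, and I would describe their spaces of defining matrices. By Proposition~\ref{R2}, each space $\langle B_1,\dots,B_d\rangle$ of defining matrices is one-dimensional, hence of the form $\langle B\rangle=\{0,B\}$ for a single nonzero matrix $B\in\Lambda_m$ (symmetric with zero diagonal, as proved for binary alternating algebras at the start of Sect.~\ref{sec:binalg}). Nondegeneracy of the associated bilinear map forces the horizontal concatenation $\pmat{B_1&\cdots&B_d}$ to have rank $m$, and since each $B_i\in\{0,B\}$ this means $\rank(B)=m$; thus $B$ is symplectic and $m$ is even.

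Next, write the defining matrix spaces of $R_1$ and $R_2$ as $\langle B\rangle$ and $\langle B'\rangle$ respectively, with $B,B'$ symplectic $m\times m$ matrices. By the cited result (\cite[Theorem 5.4]{Ma13}), there exists $A\in\gl(m,\mathbb{F}_2)$ with $ABA^t=B'$. Then $A\langle B\rangle A^t=\langle B'\rangle$, so the two spaces of defining matrices are congruent. By the equivalence \eqref{congruent3}$\Leftrightarrow$\eqref{congruent1} of Theorem~\ref{congruent}, the algebras $R_1$ and $R_2$ are isomorphic, which is exactly the claim.

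There is no real obstacle here: the heavy lifting has already been done in Theorem~\ref{congruent} (isomorphism $\iff$ congruence of defining matrix spaces) and in the cited classification of skew-symmetric matrices over $\mathbb{F}_2$ by rank. The only subtlety worth stating carefully is the reduction from the uni-dimensional matrix space $\langle B\rangle$ to its single nonzero generator $B$, and the remark that the $m\times d$ concatenation having rank $m$ collapses — in the uni-dimensional case — to the statement that the single generator $B$ itself has full rank $m$, forcing $m$ even and $B$ symplectic.
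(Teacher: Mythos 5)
Your proposal is correct and follows essentially the same route as the paper: the paper's own proof simply invokes Theorem~\ref{congruent} together with the fact that any two $m\times m$ symplectic matrices over $\mathbb{F}_2$ are congruent, which is exactly your argument with the intermediate steps (uni-dimensionality of the defining-matrix space via Proposition~\ref{R2}, full rank of the single generator from nondegeneracy) spelled out. Your more detailed write-up is a faithful expansion of the paper's two-line proof, with no divergence in method.
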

\begin{proof}
The claim follows from Theorem~\ref{congruent} and by observing that any two $m\times m$ symplectic matrices over $\mathbb{F}_2$ are congruent. 
\end{proof}

		\begin{remark}\label{isouni}
			Referring to Corollary~\ref{singleclass}, we explicitly observe  that the matrices of the  form
			\[\left[\begin{array}{c|c}
				\syp(\phi_1,\phi_2)&\mathbb{F}_2^{m\times d}\\
				\hline
				0_{d\times m}&\{D:\, b_1D=b_2\}
				\end{array}\right]\]
			represent the set of isomorphisms of two binary alternating algebras $(R_1,+,\cdot_1)$, $(R_2,+,\cdot_2)$ with underlying vector space $R_1=R_2=\mathbb{F}_2^m\oplus\mathbb{F}_2^d$ and uni-dimensional spaces $R_1^2=\langle(0,b_1)\rangle$ and $R_2^2=\langle(0,b_2)\rangle$,  where $\syp(\phi_1,\phi_2)$ denotes the set of $m\times m$ invertible matrices $A$ such that $AB_2A^t=B_1$ and where $\phi_i$ and $B_i$ represent the alternating bilinear maps and the defining matrices of $R_i$.
			
		\end{remark}

\begin{example}
	Let us consider the matrix of the \emph{standard alternating form} of rank 6 over the field with two elements:
	\[B=\pmat{0&0&0&0&0&1\\0&0&0&0&1&0\\0&0&0&1&0&0\\0&0&1&0&0&0\\0&1&0&0&0&0\\1&0&0&0&0&0}.\]
The sequences $(B,B)$, $(B,0)$, $(0,B)$ define isomorphic binary alternating algebras over $R=\mathbb{F}_2^8\simeq\mathbb{F}_2^6\oplus\mathbb{F}_2^2$ with $\Ann(R)=\langle e_7,e_8\rangle$ and uni-dimensional space $R^2$ generated by $\langle e_7+e_8\rangle$, $\langle e_7\rangle$, $\langle e_8\rangle$ respectively. The automorphism group of these algebras are determined by $\syp(B)$ and by the groups of order two generated by $\pmat{0&1\\1&0}$, $\pmat{1&0\\1&1}$ and $\pmat{1&1\\0&1}$ respectively.
	
	The set of isomorphisms between the algebra defined by $(B,B)$ and the algebra defined by $(0,B)$ is determined by $\syp(B)$ and by the set \[\left\{\pmat{1&0\\1&1},\pmat{1&1\\0&1}\right\}.\]
\end{example}

\subsection{The \baa\ version of Theorem \ref{theta}}
In this section, we obtain an easy proof of Theorem~\ref{theta} by means of \baa s.
	Let us start by emphasising the connection between the matrices $\lambda_{e_i}$  of a binary bi-brace $(R,+,\circ)$, as described in Eq.~\eqref{lambda}, and the defining matrices $B_1,\dots, B_d$ of the corresponding binary alternating algebra $(R,+,\cdot_\phi)$, both operating over the same $\mathbb{F}_2$-vector space $R=\mathbb{F}_2^m\oplus \mathbb{F}_2^d$,
	where
	\begin{align*}
		(x_1,y_1)\lambda_{(x_2,y_2)}+(x_2,y_2)=(x_1,y_1)\circ(x_2,y_2)=(x_1,y_1)+(x_2,y_2)+(0,\phi(x_1,x_2)).
	\end{align*}
	We have
	\begin{align*}
		(x_1,y_1)\lambda_{(x_2,y_2)}&=(x_1,y_1)+(0,\phi(x_1,x_2))\\
		&=(x_1,y_1)+(0,(x_1B_1x_2^t,\dots,x_1B_dx_2^t))\\
		&=(x_1,y_1)1_{m+d}+(0,x_1\pmat{B_1x_2^t&\dots&B_dx_2^t})\\
		&=(x_1,y_1)\left(\pmat{1_{m}&0_{m\times d}\\0_{d\times m}&1_{d}}+\pmat{0_{m}&\pmat{B_1x_2^t&\dots&B_dx_2^t}\\0_{d\times m}&0_{d\times d}}\right).
	\end{align*}
	Therefore,
	\begin{align*}
		\lambda_{(x_2,y_2)}=\pmat{1_{m}&\pmat{B_1x_2^t&\dots&B_dx_2^t}\\0_{d\times m}&1_{d}},\quad (x_2,y_2)\in\mathbb{F}_2^m\oplus\mathbb{F}_2^d.
	\end{align*}
	So,  writing with respect to the canonical basis of $\mathbb{F}_2^m$, we can represent  
	\[\lambda_{e_i}=\pmat{1_m&\pmat{B_1e_i^t&\dots&B_de_i^t}\\0_{d\times m}&1_d}\]
	for each $1 \leq i \leq m$. The same result has been obtained in the context of \ear~\cite[Theorem 3.11]{calderini2021properties}).
The matrix \[\Theta=\pmat{\Theta_1&\dots&\Theta_m},\]
whose columns are defined for each $1\leq i\leq m$ by  \[\Theta_i=\pmat{B_1e_i^t&\dots&B_de_i^t},\] is precisely the defining matrix of an operation $\circ$ as introduced by Civino et al.~\cite[Definition 3.5]{CBS19} in the context of alternative differential cryptanalysis.
We can then revisit the following result:
	\begin{theorem}
		Let $\Theta_1,\dots,\Theta_m$ be $m\times d$ matrices over $\mathbb{F}_2$ and $\Theta_{i,j}$ the $d$-dimensional row vector of the matrix $\Theta_i$, $i,j\in \{1,\dots,m\}$. Then $\Theta=\pmat{\Theta_i,\dots,\Theta_m}$ is the defining matrix of a binary bi-brace $(\mathbb{F}_2^m\oplus\mathbb{F}_2^d,+,\circ)$ if and only if, for each $i,j$,
		the following conditions hold:
		\begin{enumerate}[(i)]
			\item \label{civ1} $\Theta_{i,i}=0$;
			\item \label{civ2} $\Theta_{i,j}=\Theta_{j,i}$;
			\item \label{civ3}$\Theta_1,\dots,\Theta_d$ are linearly independent.
		\end{enumerate}
	\end{theorem}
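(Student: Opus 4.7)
The plan is to reduce the statement to the version of Theorem~\ref{theta} already established, by exploiting the equivalence between the families \bbb\ and \baa\ recalled in Remark~\ref{rmk:equiv}, together with the explicit formula
\[
\lambda_{e_i} = \pmat{1_m & \Theta_i \\ 0_{d\times m} & 1_d}, \qquad \Theta_i = \pmat{B_1 e_i^t & \dots & B_d e_i^t},
\]
derived just before the statement. Reading off entries gives the dictionary $\Theta_i[j,k] = B_k[j,i]$, whence
\[
\Theta_{i,j} = (B_1[j,i], B_2[j,i], \dots, B_d[j,i]).
\]
This dictionary is what will translate each of the three conditions on the $\Theta_i$'s into a property of the defining matrices $B_1,\dots,B_d$ already characterised earlier.

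For the direct implication, assume $\Theta$ is the defining matrix of a binary bi-brace. By Remark~\ref{rmk:equiv} there is a corresponding binary alternating algebra with defining matrices $B_1,\dots,B_d$, which by the previous proposition are symmetric and zero-diagonal. Via the dictionary, the zero-diagonal property gives (i) $\Theta_{i,i}=0$ and the symmetry gives (ii) $\Theta_{i,j}=\Theta_{j,i}$. For (iii), the associated bilinear map $\phi$ is nondegenerate, which is equivalent to $\ker(\lambda) = \langle e_{m+1},\dots,e_{m+d}\rangle$ as required for $\Theta$ to be the defining matrix; this forces the horizontal concatenation $\pmat{B_1 & \dots & B_d}$ to have rank $m$, which I check below is equivalent to the linear independence of $\Theta_1,\dots,\Theta_m$.

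For the converse, given $\Theta_1,\dots,\Theta_m$ satisfying (i)--(iii), I define $B_k \in \mathbb{F}_2^{m\times m}$ by $B_k[j,i] := \Theta_i[j,k]$. Condition (i) forces each $B_k$ to have zero diagonal, (ii) forces symmetry, and (iii) gives the rank-$m$ property of the concatenation, so by Theorem~\ref{th1} the $B_k$'s define a nondegenerate alternating bilinear map $\phi$ and hence a binary alternating algebra on $\mathbb{F}_2^m \oplus \mathbb{F}_2^d$. Via Remark~\ref{rmk:equiv} this produces a binary bi-brace whose lambda matrices are precisely $\spmat{1_m & \Theta_i \\ 0 & 1_d}$, so $\Theta$ is its defining matrix.

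The only piece of real bookkeeping, and the main obstacle to watch for, is verifying that (iii) is indeed equivalent to $\pmat{B_1 & \dots & B_d}$ having rank $m$. A relation $\sum_i c_i \Theta_i = 0$ in $\mathbb{F}_2^{m \times d}$ reads entrywise as $\sum_i c_i B_k[j,i] = 0$ for every $j$ and $k$, that is $B_k c = 0$ for every $k$, where $c = (c_1,\dots,c_m)^t$. By symmetry of the $B_k$'s, this is the same as $c^t \pmat{B_1 & \dots & B_d} = 0$, so (iii) is precisely the vanishing of the left kernel of the concatenation, i.e.\ its being of rank $m$. Everything else is a direct transcription through the established \bbb/\baa\ equivalence.
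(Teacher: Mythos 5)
Your proposal is correct and follows essentially the same route as the paper's proof: it translates the conditions on $\Theta$ into the defining matrices $B_1,\dots,B_d$ being symmetric, zero-diagonal, and of rank-$m$ horizontal concatenation, and then invokes the equivalence between nondegenerate alternating bilinear maps, binary alternating algebras and binary bi-braces. Your explicit verification that condition (iii) matches the rank-$m$ requirement (using the symmetry of the $B_k$'s) is simply a more detailed version of a step the paper states without elaboration.
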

\begin{proof}
			Let us consider the $m\times m$ matrices $B_1,\dots,B_d$ defined for each $1 \leq i \leq m$ by
			\[\pmat{B_1e_i^t&\dots&B_de_i^t}=\Theta_i.\] We have that \[\Theta_{i,j}=e_j\pmat{B_1e_i^t&\dots&B_de_i^t}=(e_jB_ie_i^t,\dots,e_jB_de_i^t).\]
The conditions described by equations ~\eqref{civ1} and \eqref{civ2} are satisfied if and only if $B_1,\dots, B_d$ are symmetric matrices with zero diagonal elements.  
Meanwhile, condition \eqref{civ3} is fulfilled only when the concatenated matrix $\pmat{B_1&\dots&B_d}$ possesses a rank of $m$.
 This means that \eqref{civ1}, \eqref{civ2}, \eqref{civ3} are equivalent to requiring that the bilinear map $\phi:\mathbb{F}_2^m\oplus\mathbb{F}_2^m\longrightarrow\mathbb{F}_2^d$ determined by $B_1,\dots, B_d$ is alternating and nondegenerate. This is equivalent to defining a nilpotent algebra of class two and nil index two, and therefore to defining a binary bi-brace.
	\end{proof}

\begin{example}
	The following $4\times 16$ matrix over $\mathbb{F}_2$ defined as
	\[\Theta=\pmat{\Theta_1&\Theta_2&\Theta_3&\Theta_4}\\=\left[\begin{array}{c|c|c|c}
		\mat{0& 0& 0& 0\\ 		 			
			0& 0& 0& 1\\ 					
			1& 0& 0& 1\\ 					
			1& 0& 1& 1}&   		  	 		
		\mat{0& 0& 0& 1\\
			0& 0& 0& 0\\ 
			1& 1& 1& 1\\ 
			0& 1& 1& 1}& 
		\mat{1& 0& 0& 1\\ 
			1& 1& 1& 1\\ 
			0& 0& 0& 0\\ 
			1& 0& 0& 1}&  
		\mat{1& 0& 1& 1\\
			0& 1& 1& 1\\
			1& 0& 0& 1\\
			0& 0& 0& 0}\end{array}\right]\]
is the defining matrix of the operation 
	\[(x_1,y_1)\circ(x_2,y_2)=(x_1+x_2,y_1+y_2+\phi(x_1,x_2)),\quad (x_i,y_i)\in\mathbb{F}_2^4\oplus\mathbb{F}_2^4\]
	where $\phi$ is the bilinear map of Example~\ref{example1} with defining matrices $(B_1,B_2,B_3,B_4)$. Note that $\Theta_i=\pmat{B_1e_i^t&B_2e_i^t&B_3e_i^t&B_4e_i^t}$ for each $1\leq i\leq 4$.
\end{example}

\subsection{Direct sum of \baa}
The practicality of the alternative differential attack can be enhanced by assuming round independence, much like in the classical scenario. This allows for the "factorisation" of $\circ$-differentials propagation through encryption functions across all layers. If we aim to analyse the difference propagation to each s-box individually, then attacking the cipher through a parallel operation is necessary. This operation should behave identically across all the $h$ s-boxes, necessitating that the diffusion layer function $\mu$ is an automorphism of such constructed parallel operations. The objective of this final section is to elucidate the computation of these maps. We show how to extend the findings of Corollary~\ref{cor:aut} and Theorem~\ref{uni} to the direct sum of bi-braces.

Let $h\geq 2$ be an integer and let $(R,+,\cdot_\phi)$ be a binary alternating algebra, $R=V\oplus W$, $W=\Ann(R)\simeq \mathbb{F}_2^d$, $V\simeq\mathbb{F}_2^m$. Let us consider the binary alternating algebra $\left(S,+,\cdot\right)$ of dimension $(m+d)^h$ over $\mathbb{F}_2$, given by the direct $h$th sum $S=\bigoplus_{i=1}^h R$ where:
\begin{align*}
	(x_1,\dots,x_h)\cdot(y_1,\dots,y_h)=(x_1\cdot_\phi y_1,\dots,x_h\cdot_\phi y_h)
\end{align*}
for every $x_i,y_i\in R$.
Notice that $\Ann(S)=\bigoplus^h W$ and $S^2=\bigoplus^h R^2$.

Let $g_{i,j}$ be square matrices of size $m+d$ for each $1 \leq i,j \leq h$ and $G=\pmat{g_{i,j}}\in\gl(S)$. Clearly $G$ is an automorphism of $S$ if and only if $G$ is invertible and
\begin{align}\label{eq:long}
	(x_1,\dots,x_h)\pmat{g_{i,j}}\cdot(y_1,\dots,y_h)\pmat{g_{i,j}}=(x_1\cdot_\phi y_1,\dots,x_h\cdot_\phi y_h)\pmat{g_{i,j}}
\end{align}
for every $x_i,y_i\in R$. Therefore,
\begin{align*}
	\mathrm{LHS} \text { of Eq.~\eqref{eq:long}}=&\left(\sum_{i}{x_ig_{i,1}},\dots,\sum_i{x_ig_{i,h}}\right)\cdot\left(\sum_{l}{y_lg_{l,1}},\dots,\sum_l{y_lg_{l,h}}\right)\\
	=&\left(\sum_{i,l}{x_ig_{i,1}\cdot_\phi y_lg_{l,1}},\dots,\sum_{i,l}{x_ig_{i,h}\cdot_\phi y_lg_{l,h}}\right)=\\
	\mathrm{RHS} \text { of Eq.~\eqref{eq:long}}=&\left(\sum_i{(x_i\cdot_\phi y_i)g_{i,1}},\dots,\sum_i{(x_i\cdot_\phi y_i)g_{i,h}}\right),
\end{align*}
meaning that
\begin{align*}
	\sum_{i,l}{x_ig_{i,j}\cdot_\phi y_lg_{l,j}}=\sum_i{(x_i\cdot_\phi y_i)g_{i,j}}
\end{align*}
for each $1 \leq i,j,l\leq h$ and $x_i,y_i,y_l\in R$. We can rewrite the last equation as
\begin{align}\label{eq5}
	\sum_{i}{x_ig_{i,j}\cdot_\phi y_ig_{i,j}}+\sum_{i\neq l}{x_ig_{i,j}\cdot_\phi y_lg_{l,j}}=\sum_i{(x_i\cdot_\phi y_i)g_{i,j}},
\end{align}
obtaining that Eq.~\eqref{eq5} holds when $x_2=\dots=x_h=0=y_2=\dots=y_h$, i.e.\
\[x_1g_{1,j}\cdot_\phi y_1g_{1,j}=(x_1\cdot_\phi y_1)g_{1,j}.\]
Thus, $g_{1,j}$ is an endomorphism of the algebra $R$. By iterating this argument, we can prove that $g_{i,j}$ are endomorphisms of $R$ for every $i,j$. Therefore, Eq.~\eqref{eq5} becomes
\begin{align*}
	\forall i\neq l\quad {x_ig_{i,j}\cdot_\phi y_lg_{l,j}}=0,
\end{align*}
which is equivalent to the  condition
\begin{align}\label{condition1}
	\forall z\in R,\ 1 \leq i,j,l \leq h, i\neq l:\quad zg_{i,j}\in\Ann(R) \text{ or } zg_{l,j}\in\Ann(R).
\end{align}
Let us write $g_{i,j}$ in the block form as in Corollary~\ref{cor:aut}. Setting $z=(x_1,y_1)\in V\oplus W$ we have
\[zg_{i,j}=(x_1,y_1)\pmat{A_{i,j}&C_{i,j}\\0&D_{i,j}}=(x_1A_{i,j},x_1C_{i,j}+y_1D_{i,j}).\]
Note that, for two fixed indices $1 \leq i,j \leq h$, if $zg_{i,j}\in\Ann(R)=0_m\oplus\mathbb{F}_2^d$ for each $z\in R$, then 
$A_{i,j}=0$. So, Eq.~\eqref{condition1} is equivalent to:
\begin{align}\label{condition2}
 \forall	1 \leq i,j,l\leq h, i\neq l:\quad A_{i,j}=0\text{ or } A_{l,j}=0.
\end{align}
From the invertibility of 
\[G=\pmat{\mat{A_{11}&C_{11}\\0&D_{11}}&\dots&\mat{A_{1k}&C_{1h}\\0&D_{1h}}\\
	\vdots&&\vdots\\
	\mat{A_{h1}&C_{h1}\\0&D_{h1}}&\dots&\mat{A_{hh}&C_{1
			hh}\\0&D_{hh}}}\]
and by Eq.~\eqref{condition2}, it follows that for each column index $1 \leq j \leq h$ there exist exactly one row index $j\pi$, for some $\pi\in\sym(h)$, such that $A_{j\pi,j}\neq 0$ and $A_{j\pi,l}=0$ for each $j\neq l$.
Since $g_{j\pi,l}=\pmat{A_{j\pi,l}&C_{j\pi,l}\\0&D_{j\pi,l}}$ is an endomorphism of $(R,+,\cdot_\phi)$, if $j\neq l$ we have
\[\phi(x_1,x_2)D_{j\pi,l}=\phi(x_1A_{j\pi,l},x_2A_{j\pi,l})=0\]
and $A_{j\pi,j}\in\gl(m,\mathbb{F}_2)$.
Note also that, if $R^2=\langle(0,b)\rangle$ is uni-dimensional, then $bD_{j\pi,l}=0$ for $j\neq l$.

Now, let us consider a matrix $\beta\in\gl((m+d)^h,\mathbb{F}_2)$ such that
\[\beta G\beta^{-1}=\left[\begin{array}{c|c}
	\mat{A_{11}&\dots&A_{1h}\\
		\vdots&&\vdots\\
		A_{h1}&\dots&A_{hh}}&\mat{C_{11}&\dots&C_{1h}\\
		\vdots&&\vdots\\
		C_{h1}&\dots&C_{hh}}\\
	\hline
	0&\mat{D_{11}&\dots&D_{1h}\\
		\vdots&&\vdots\\
		D_{h1}&\dots&D_{hh}}
\end{array}\right]\]
Then $\beta G\beta^{-1}$ is an automorphism of the binary alternating algebra $(S,+,\cdot_\beta)$, where 
\[x\cdot_\beta y=(x\beta\cdot y\beta)\beta^{-1},\]
for every $x,y\in S$. Clearly $(S,+,\cdot_\beta)$ is a binary alternating algebra isomorphic to $(S,+,\cdot)$. Since $\beta G\beta^{-1}$ is invertible, the matrices $A=\pmat{A_{i,j}}$ and $D=\pmat{D_{i,j}}$ are invertible.

From the previous argument we obtain the following characterisation of the automorphism group of the direct sum of algebras in \baa, generalising 
the one obtained by Calderini et al.~\cite{calderini2024optimal} in the less general context of bi-braces with maximal socle.
\begin{theorem}\label{thm:parallelgen}
	Let $(S,+,\cdot)$ be the direct $h$th sum of a binary alternating algebra $(R,+,\cdot_\phi)$, with bilinear map $\phi:V\times V\longrightarrow W$, $R=V\oplus W$. The automorphism group of $(S,+,\cdot)$ consists of the matrices
	\[G=\pmat{g_{ij}}=\pmat{\mat{A_{11}&C_{11}\\0&D_{11}}&\dots&\mat{A_{1k}&C_{1h}\\0&D_{1h}}\\
		\vdots&&\vdots\\
		\mat{A_{h1}&C_{h1}\\0&D_{h1}}&\dots&\mat{A_{hh}&C_{
				hh}\\0&D_{hh}}}\]
	such that for a given permutation $\pi=\pi_G\in\sym(h)$ and for each $1 \leq i,j\leq h$ the following conditions hold:
	\begin{enumerate}
		\item $g_{j\pi, j}\in \aut(R,+,\cdot_\phi)$;
		\item if $j\neq i$ then $A_{j\pi,i}=0$ and $\phi(x_1,x_2)D_{j\pi,i}=0$ for each $x_1,x_2\in V$;
		\item $D=\pmat{D_{i,j}}$ is an invertible matrix.
	\end{enumerate}
	In particular, if $R^2=\langle(0,b)\rangle$ is a uni-dimensional space, then $A_{j\pi,j}\in \syp(\phi)$, and $bD_{j\pi,j}=b$ for each $1 \leq j\leq h$.
\end{theorem}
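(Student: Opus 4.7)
The plan is to follow directly the computations already carried out in the paragraph preceding the theorem and package them as a proof, using Corollary~\ref{cor:aut} to describe each block individually. I start by writing an arbitrary $G\in\gl(S)$ in $h\times h$ block form $G=(g_{i,j})$ with each $g_{i,j}$ of size $m+d$, and expand componentwise the automorphism identity $(x_1,\dots,x_h)G\cdot(y_1,\dots,y_h)G=\bigl((x_1,\dots,x_h)\cdot(y_1,\dots,y_h)\bigr)G$. Specialising so that all inputs except the $i$-th vanish yields $x_i g_{i,j}\cdot_\phi y_i g_{i,j}=(x_i\cdot_\phi y_i)g_{i,j}$ for every $i,j$. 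Thus each $g_{i,j}$ is an endomorphism of $(R,+,\cdot_\phi)$, and by Corollary~\ref{cor:aut} it takes the upper block-triangular shape $\pmat{A_{i,j}&C_{i,j}\\0&D_{i,j}}$ and satisfies $\phi(x_1A_{i,j},x_2A_{i,j})=\phi(x_1,x_2)D_{i,j}$.

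Feeding this back into the general identity leaves only the cross-terms, producing the condition $x_i g_{i,j}\cdot_\phi y_l g_{l,j}=0$ for all $i\neq l$ (as in Eq.~\eqref{condition1}). Via the block form together with the nondegeneracy of $\phi$, this forces the dichotomy of Eq.~\eqref{condition2}: in every column $j$ at most one row index $i$ has $A_{i,j}\neq 0$. To extract the permutation $\pi=\pi_G\in\sym(h)$ I will conjugate $G$ by the block permutation $\beta$ that gathers the $V$-coordinates before the $W$-coordinates, so that $\beta G\beta^{-1}$ becomes globally upper block-triangular with diagonal blocks $A=(A_{i,j})$ and $D=(D_{i,j})$. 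Invertibility of $G$ forces both $A$ and $D$ to be invertible; combined with the column-dichotomy, this makes $A$ a generalised block permutation matrix carrying exactly one invertible $A_{j\pi,j}$ in each row and each column. The vanishing condition $\phi(x_1,x_2)D_{j\pi,i}=0$ for $i\neq j$ is then just the endomorphism identity for $g_{j\pi,i}$ evaluated with $A_{j\pi,i}=0$, and $g_{j\pi,j}\in\aut(R,+,\cdot_\phi)$ follows because, once $A_{j\pi,j}$ is invertible, the endomorphism identity together with the nondegeneracy of $\phi$ upgrades the endomorphism to an automorphism.

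For the uni-dimensional specialisation, applying Corollary~\ref{SpFix} to each $g_{j\pi,j}\in\aut(R,+,\cdot_\phi)$ immediately returns $A_{j\pi,j}\in\syp(\phi)$ and $bD_{j\pi,j}=b$. The main obstacle is the combinatorial step that promotes the column-dichotomy of Eq.~\eqref{condition2} into an honest permutation: one must check that "at most one nonzero $A_{i,j}$ per column" together with the global invertibility of $A=(A_{i,j})$ forces \emph{exactly} one nonzero (and individually invertible) block per row and per column. After the conjugation by $\beta$ this is a standard fact about block-monomial matrices over a field, but it is the one step where the direct-sum structure genuinely enters, and it cannot be reduced to the single-block analysis; everything else is forced by Corollary~\ref{cor:aut}, the nondegeneracy of $\phi$, and the calculations already displayed in the text preceding the theorem.
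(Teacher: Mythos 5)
Your proposal retraces the paper's own proof almost verbatim: the paper's argument for Theorem~\ref{thm:parallelgen} is exactly the computation displayed before the statement (componentwise expansion, specialisation to a single nonzero coordinate to make each $g_{i,j}$ an endomorphism of $(R,+,\cdot_\phi)$, the cross-term conditions \eqref{condition1}--\eqref{condition2}, the conjugation by $\beta$ giving invertibility of $A=(A_{i,j})$ and $D=(D_{i,j})$, the extraction of $\pi$, and Theorem~\ref{uni}/Corollary~\ref{SpFix} for the uni-dimensional clause). Your reordering (conjugate first, then read off the block-monomial structure of $A$) and your explicit remark that "at most one nonzero $A_{i,j}$ per column" plus invertibility yields exactly one invertible block per row and column are only cosmetic refinements of the same route.

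There is, however, one concrete step where your added justification does not work, and it is precisely the step the paper leaves implicit: the claim that, once $A_{j\pi,j}$ is invertible, "the endomorphism identity together with the nondegeneracy of $\phi$ upgrades the endomorphism to an automorphism". The endomorphism identity for $g_{j\pi,j}$ is $\phi(x_1A_{j\pi,j},x_2A_{j\pi,j})=\phi(x_1,x_2)D_{j\pi,j}$, and with $A_{j\pi,j}$ invertible this pins down the action of $D_{j\pi,j}$ only on the subspace $R^2\leq W$; nondegeneracy of $\phi$ says nothing about its action on a complement of $R^2$ in $W=\Ann(R)$, so it cannot force $D_{j\pi,j}\in\gl(d,\mathbb{F}_2)$ unless $R^2=\Ann(R)$ (e.g.\ $d=1$). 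Nor does invertibility of the global $D=(D_{i,j})$, which is what the conjugation argument actually delivers, imply invertibility of its diagonal blocks: when $\dim(R^2)<d$ the off-blocks $D_{j\pi,i}$ ($i\neq j$) are only required to kill $R^2$, and one can let them permute the complements of $R^2$ among the bricks, producing genuine automorphisms of $S$ in which every $D_{j\pi,j}$ is singular while $D$ is invertible. So this step would fail as argued; to close it you must either restrict to $R^2=\Ann(R)$ (in particular the uni-dimensional case that the applications use) or weaken condition (1) to what the computation really gives ($A_{j\pi,j}$ invertible and the identity $\phi(x_1A_{j\pi,j},x_2A_{j\pi,j})=\phi(x_1,x_2)D_{j\pi,j}$, together with invertibility of the global $D$). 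A smaller shared leap: you, like the paper, cite Corollary~\ref{cor:aut} to give every $g_{i,j}$ the upper block-triangular form, but that corollary concerns automorphisms of $R$, whereas most $g_{i,j}$ here are non-invertible endomorphisms, for which the vanishing of the lower-left block requires a separate argument when $R^2\subsetneq\Ann(R)$.
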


The following final result describes the automorphism group of the direct sum of $h$ distinct binary alternating algebras defined by the same parameters $m$ and $d$, all of them with uni-dimensional $R^2$ space.

\begin{theorem}\label{thm:parallelone}
	Let $\{(R_i,+,\cdot_i)\}_{i=1}^h$ be $h\geq 2$ binary alternating algebras, $R_i=\mathbb{F}_2^m\oplus\mathbb{F}_2^d$, with uni-dimensional spaces $R_i^2=\langle(0,b_i)\rangle$, alternating bilinear maps $\phi_i$ and defining matrices $B_i$. The automorphism group of the direct sum of algebras $(S,+,\cdot)$, $S=R_1\oplus \dots\oplus R_h$, where
	\begin{align*}
		(x_1,\dots,x_h)\cdot(y_1,\dots,y_h)=(x_1\cdot_1y_1,\dots,x_h\cdot_h y_h),
	\end{align*}
	consist of the matrices $G=\pmat{g_{i,j}}$, $g_{i,j}=\pmat{A_{i,j}&C_{i,j}\\0&D_{i,j}}$ such that for a given permutation $\pi=\pi_G\in \sym(h)$ and for each $1 \leq i,j \leq h$ the following conditions hold:
	\begin{enumerate}
		\item \label{eq:cond1} $A_{j\pi,j}B_jA_{j\pi,j}^t=B_i$ and $b_iD_{j\pi,j}=b_j$;
		\item  if $j\neq i$ then $A_{i\pi,j}=0$ and $bD_{i\pi,j}=0$;
		\item  $D=\pmat{D_{i,j}}$ is an invertible matrix.
	\end{enumerate}
\end{theorem}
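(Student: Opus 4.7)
The plan is to follow the argument used for Theorem~\ref{thm:parallelgen}, generalising it to the case where the summands of $S$ are potentially non-isomorphic. Writing $G = (g_{i,j})_{1\le i,j\le h}$ in block form with each $g_{i,j} = \spmat{A_{i,j} & C_{i,j} \\ 0 & D_{i,j}}$ of size $m+d$, I view the block $g_{i,j}$ as a candidate linear map $R_i \to R_j$ (elements of $S$ are read as row vectors, so the $j$-th component of $(x_1,\dots,x_h)G$ is $\sum_i x_i g_{i,j}$). The requirement that $G$ preserves the algebra product on $S$, projected onto the $j$-th factor, yields
\begin{equation}\label{eq:planGauto}
\sum_i (x_i g_{i,j}) \cdot_j (y_i g_{i,j}) + \sum_{i \ne l} (x_i g_{i,j}) \cdot_j (y_l g_{l,j}) = \sum_i (x_i \cdot_i y_i) g_{i,j}.
\end{equation}

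First, setting $x_k = y_k = 0$ for all $k$ except one fixed index reduces \eqref{eq:planGauto} to $(x g_{i,j}) \cdot_j (y g_{i,j}) = (x \cdot_i y) g_{i,j}$, so each block $g_{i,j}$ is an algebra homomorphism from $(R_i, +, \cdot_i)$ to $(R_j, +, \cdot_j)$; in terms of defining data this reads
\[
\phi_i(x_1, x_2) D_{i,j} = \phi_j(x_1 A_{i,j}, x_2 A_{i,j}).
\]
Subtracting these diagonal contributions from \eqref{eq:planGauto} leaves the cross-term identity $\phi_j(x A_{i,j}, y A_{l,j}) = 0$ for all $x, y$ and $i \ne l$; by the nondegeneracy of $\phi_j$ this forces $A_{i,j} = 0$ or $A_{l,j} = 0$ for any two distinct row indices in a common column $j$.

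Next, I would reorder the canonical basis of $S$ so that all $V$-parts precede all $W$-parts; then $G$ becomes block upper-triangular with diagonal blocks $A = (A_{i,j})$ and $D = (D_{i,j})$, and the invertibility of $G$ forces both $A$ and $D$ invertible. Combined with the column-wise exclusion just derived, each column $j$ of $A$ contains exactly one invertible entry $A_{j\pi, j}$, and the assignment $j \mapsto j\pi$ defines the permutation $\pi \in \sym(h)$ of the statement. This yields condition~(3) and the vanishing $A_{i\pi, j} = 0$ for $i \ne j$ in condition~(2).

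To recover the rest, I apply Remark~\ref{isouni}: the diagonal block $g_{j\pi, j}$ is a homomorphism $R_{j\pi} \to R_j$ with invertible $A$-part, hence an isomorphism of binary alternating algebras, characterised by $A_{j\pi, j} B_j A_{j\pi, j}^t = B_{j\pi}$ and $b_{j\pi} D_{j\pi, j} = b_j$, giving condition~(1). For the off-diagonal blocks ($i \ne j$), $A_{i\pi, j} = 0$ collapses the homomorphism identity to $\phi_{i\pi}(x_1, x_2) D_{i\pi, j} = 0$, which, since $R_{i\pi}^2 = \langle (0, b_{i\pi})\rangle$ is uni-dimensional, is equivalent to $b_{i\pi} D_{i\pi, j} = 0$, completing condition~(2). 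The converse, that every $G$ fulfilling (1)--(3) is an automorphism, is a direct substitution into \eqref{eq:planGauto}: (1) makes the diagonal terms reproduce the product on $S$, while (2) kills all cross terms. The main obstacle is the consistent bookkeeping of source/target indices when $R_i \ne R_j$; once the convention is fixed, the argument is a streamlined version of the one used for Theorem~\ref{thm:parallelgen}, with Remark~\ref{isouni} playing the role that $\aut(R,+,\cdot_\phi)$ played there.
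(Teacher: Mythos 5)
Your overall route is the paper's: project the automorphism identity onto each factor, show that every block $g_{i,j}$ is an algebra homomorphism $R_i\to R_j$, extract the permutation $\pi$ from the invertibility of $G$ after reordering the basis, and translate the surviving diagonal blocks via Remark~\ref{isouni}; the converse by direct substitution is also as in the paper. However, there is a genuine gap at the pivotal step where you claim that the cross-term identity $\phi_j(xA_{i,j},yA_{l,j})=0$ for all $x,y$ ``by the nondegeneracy of $\phi_j$ forces $A_{i,j}=0$ or $A_{l,j}=0$''. Nondegeneracy only says that a vector orthogonal to the whole space is zero, whereas the cross-term identity only says that the images of $A_{i,j}$ and $A_{l,j}$ are mutually $\phi_j$-orthogonal; since $\phi_j$ is alternating there are many pairs of nonzero subspaces that are mutually orthogonal (both inside a common totally isotropic subspace, or even both equal to one line $\langle u\rangle$, as $\phi_j(u,u)=0$). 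So the column-wise exclusion does not follow as stated, and it is exactly the crux of the necessity direction.

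The gap can be closed, and the repair genuinely uses $\dim R_i^2=1$. The homomorphism identity $\phi_i(x,y)D_{i,j}=\phi_j(xA_{i,j},yA_{i,j})$ reads $(xB_iy^t)\,b_iD_{i,j}=(xA_{i,j}B_jA_{i,j}^ty^t)\,b_j$, so each block is of one of two kinds: either $A_{i,j}B_jA_{i,j}^t=B_i$ (hence $A_{i,j}$ is invertible, since $\rank(B_i)=m$) and $b_iD_{i,j}=b_j$, or $A_{i,j}B_jA_{i,j}^t=0$ (so the image of $A_{i,j}$ is totally $\phi_j$-isotropic, of dimension at most $m/2$) and $b_iD_{i,j}=0$. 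Invertibility of the reordered block matrix $\mathcal A=(A_{i,j})$ makes each block-column map $(x_1,\dots,x_h)\mapsto\sum_i x_iA_{i,j}$ surjective onto $\mathbb F_2^m$; if all blocks of column $j$ were of the second kind, their images would be totally isotropic and pairwise orthogonal by the cross condition, so their sum would be totally isotropic of dimension at most $m/2<m$, a contradiction. Hence each column contains an invertible block $A_{j\pi,j}$, and only then does nondegeneracy (applied against its full image $\mathbb F_2^m$) force the remaining $A$-blocks of that column to vanish; from there your argument goes through. A minor further slip: an invertible $A$-part does not make $g_{j\pi,j}$ an isomorphism of algebras (its $D$-part need only send $b_{j\pi}$ to $b_j$), but condition (1) still follows directly from the homomorphism identity, so this is cosmetic. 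For what it is worth, the paper's own proof is equally terse at the exclusion step (it defers to the argument preceding Theorem~\ref{thm:parallelgen}), so your proposal matches the published argument in structure; but as written, your justification of the exclusion would not stand on its own.
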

	\begin{proof}
	A matrix $G=\pmat{g_{i,j}}\in\gl(S)$ is an automorphism of $(S,+,\cdot)$ if and only if $G$ is invertible and the following equations hold for each $x,y\in \mathbb{F}_2^m\oplus\mathbb{F}_2^d$ and $1 \leq i,j,l \leq h$:
	\begin{itemize}
		\item $(x\cdot_i y)g_{i,j}=xg_{i,j}\cdot_j yg_{i,j}$;
		\item $\forall\ i\neq l\quad xg_{i,j}\cdot_jyg_{l,j}=0$.
	\end{itemize}
	The first equation implies that $g_{i,j}: R_i\longrightarrow R_j$ is an homomorphism of algebras for each pair of indices $i,j$. Moreover, arguing as in the previous theorem, for each column index $j$ there exist exactly one row index $j\pi$, $\pi\in\sym(h)$, such that $g_{j\pi,j}$ is invertible. By Corollary~\ref{singleclass} and Remark~\ref{isouni}, this is equivalent to the condition~\eqref{eq:cond1} in the statement.
\end{proof}
\section{Conclusion}\label{sec:concl}
In this paper we have introduced the concepts of binary bi-braces and binary alternating algebras. Working on a finite-dimensional binary vector space $(M,+)$ serving as the message space within an SPN, which is partitioned into bricks of size corresponding to the s-boxes and denoted as $M = R \oplus R \oplus \dots \oplus R$, these structures corresponds to pairs of translation groups $(T_+,T_\circ)$ as in Calderini et al.~\cite{calderini2021properties}, where the operation $\circ$ is employed to compute differentials for an alternative differential attack. Notably, we have observed the equivalence among the three algebraic constructions (see\ Remark~\ref{rmk:equiv}). Through the examination of binary alternating algebras using bilinear maps, we have derived in Sect.~\ref{sec:algebras} and Sect.~\ref{sec:binalg} a useful description of the automorphism group of the algebra $(R,+,\cdot)$, both in the general scenario and particularly when $\dim(R^2) = 1$, where the predictability of differences subsequent to key addition have the highest probabilities. We have also shown how the knowledge of $\aut(R,+,\cdot)$ can used to obtain the description of all the automorphisms of $(M,+,\cdot)$ (see\ Theorem~\ref{thm:parallelgen} and Theorem~\ref{thm:parallelone}).

The abstract algebraic entity $\aut(M,+,\cdot)$ contains a subset of matrices embodying a trapdoor diffusion layer, exploitable by attackers via the operation $\circ$ induced by the algebra's product. 

It is noteworthy that in preceding sections, we have meticulously explored the analysis of $\circ$-difference propagations across the diffusion layer and the key-addition layer of the cipher. However, we have not addressed the examination of difference propagation through the confusion layer.
This omission is justified by the inherent XOR-nonlinearity of the s-box in an XOR-based SPN, implying a likely deterioration in its differential properties when examined with respect to a different operation. 
This assertion finds support, for example, examining the differential properties of the  optimal 4-bit s-boxes as classified by Leander and Poschmann~\cite{leander2007classification}, where each equivalence class contains maps exhibiting varying degrees of nonlinearity concerning operations derived from binary bi-braces. Particularly, among the 16 classes, four exhibit the lowest level of nonlinearity~\cite{calderini2024optimal}.

Using the results of this research, creating a trapdoor cipher within the SPN family, susceptible to the alternative differential attack, can be achieved through the following steps:
\begin{itemize}
\item select an s-box $\gamma'$ on $R$ showing good differential properties w.r.t.\ the XOR;
\item take an operation $\circ$ (i.e.\ a binary bi brace, i.e.\ an alternative binary algebra) such that the considered s-box exhibits a biased distribution of differences computed w.r.t.\ $\circ$. A reasonable assumption, though not entirely restrictive, would be to set $\dim(R^2)=1$; 
\item determine $\aut(R,+,\circ)$ following the method outlined in Corollary~\ref{SpFix};
\item compute $\aut(M,+,\circ)$, where $\circ$ denotes the parallel extension of the operation to $M$, based on the knowledge of $\aut(R,+,\circ)$ using Theorem~\ref{thm:parallelone};
\item search in $\aut(M,+,\circ)$ for a map $\mu$ exhibiting good diffusion properties.
\end{itemize}
Computational experiments indicate that the devised cipher, coupled with the selected operation, could constitute a trapdoor cipher along with its corresponding trapdoor. This assertion is supported by the finding that virtually any selection of $\mu \in \aut(M,+,\circ)$ could result in a cipher vulnerable to the alternative differential attack~\cite{calderini2024optimal}.

Potential countermeasures against alternative differential cryptanalysis can be inferred directly from Theorem~\ref{thm:parallelgen}, which highlights that a Maximum Distance Separable (MDS) diffusion layer prevents differences with respect to a parallel alternative operation defined on $M$ from propagating. Additionally, empirical investigations indicate that the impact of alternative operations on the differential behaviour of 8-bits s-boxes is less severe compared to their effect on 4-bit s-boxes. Nonetheless, the realm of possibly vulnerable ciphers employing 4-bit s-boxes and nonMDS diffusion layers is widespread, especially due to the presence of lightweight ciphers.


\bibliographystyle{amsalpha}
\bibliography{biblio}

\end{document}